\newtheorem{lem}{Lemma}[section]
\newtheorem{thm}{Theorem}[section]
\newtheorem{co}[thm]{Corollary}
\newtheorem{pr}[thm]{Proposition}
\newtheorem{hyp}[thm]{Assumption}
\numberwithin{equation}{section}
\newcommand{\R}{\mathbb{R}}
\newcommand{\T}{\mathbb{T}}
\newcommand{\Q}{\mathbb{Q}}
\newcommand{\p}{\mathbb{P}}
\begin{document}
\title{Weak law of large numbers for some Markov chains along non homogeneous genealogies}
\author{Vincent Bansaye \footnote{CMAP, Ecole Polytechnique, Palaiseau, France} 
, Chunmao Huang  \footnote{ CMAP, Ecole Polytechnique, Palaiseau, France. Corresponding author at sasamao02@gmail.com.} 
 }
\maketitle
\vspace{1.5cm}
\begin{abstract}
We consider a population with non-overlapping generations, whose size goes to infinity. It is described by a discrete genealogy which may be time non-homogeneous and we pay special attention 
to  branching trees in varying environments. A Markov chain  models the dynamic of the trait of each individual  along this genealogy and may also be  time non-homogeneous. 
Such models are motivated by  transmission processes in the  cell division, reproduction-dispersion dynamics 
or sampling problems in evolution.
We want to determine the evolution of the distribution of the traits among the population, namely the asymptotic behavior of the proportion of individuals with a given trait. We prove some quenched laws of large numbers which rely on the ergodicity of an auxiliary process, in the same vein as \cite{guy,delmar}. Applications to time inhomogeneous Markov chains
 lead us to derive a backward (with respect to the environment) law of large numbers and  a law of large numbers on the whole population until generation $n$. A central limit is also established in the transient case. 
\end{abstract}

\textit{Key words.} Non-homogeneous Markov chain, random environment, branching processes, law of large numbers.

\textit{MSC classes.} 60J05, 60J20,	60J80, 60J85, 60F05.

 \bigskip

\section{Introduction}\label{LLS1}

In this paper, we consider Markov chains which are indexed by  discrete trees. This approach is motivated in particular  by the study of structured populations. The tree is thus describing the genealogy of the population in discrete time, with non overlapping generations and the nodes of the tree are the individuals.  We consider  a trait in the population, which could be the location of the individual, its phenotype, its genotype or any biological characteristic. Letting this trait evolve as a Markov chain and be transmitted to the offspring with a random transition leads us to consider a Markov chain indexed by the genealogical tree.
Such a process can also be regarded as a branching particle system where the  offspring of each particle is given by the genealogy and the associated traits by the Markov chain. \\



Let ($\mathcal X, B_{\mathcal X}$) be a measurable space. 
 The process starts with an initial single individual $\emptyset$ with trait $X(\emptyset) \in\mathcal{X}$ whose distribution is $\nu$. The initial individual $\emptyset$ produces a random number $N=N(\emptyset)$ of particles of generation $1$, denoted by $1,2,\cdots,N$, with traits determined by
 
$$\mathbb{P}(X(1)\in dx_1, \cdots, X(k)\in dx_k|N=k, X(\emptyset)=x)=p^{(k)}(0)(x, dx_1,\cdots,dx_k),$$
where for each $k, n\in\mathbb{N}$ and $x\in\mathcal{X}$, $p^{(k)}(n)(x,\cdot)$ is a probability measure on ($\mathcal{X}^k, B_{\mathcal{X}^k})$.
More generally, each individual  $u=u_1\cdots u_n$ of generation $n$ whose trait is $X(u)$ yields $N(u)$ offspring in  generation $n+1$, denoted by $u1, u2,\cdots,uN(u)$, whose traits are determined by
\begin{eqnarray*}
&&\mathbb{P}(X(u1)\in dx_1, \cdots, X(uk)\in dx_k|N(u)=k, X(u)=x)\\
&&\qquad\qquad \qquad \qquad\qquad \qquad \qquad \qquad \qquad \qquad=p^{(k)}(n)(x, dx_1,\cdots,dx_k).
\end{eqnarray*}
The individuals of each generation evolve independently, so the process enjoys the branching property.

The evolution of a time homogeneous Markov chain indexed by a binary tree is  well known thanks to the works of  \cite{atarbre, guy}. As soon as the Markov chain along a random lineage of the binary tree is ergodic,  a  law of large numbers holds. It yields the convergence for the proportions of individuals in generation $n$ whose trait has some given value.
 More specifically, this asymptotic proportion  is given by the stationary measure of the ergodic Markov chain. This convergence holds in probability in general, and under additional assumptions on the speed of convergence of the  Markov chain or uniform ergodicity, it also holds almost surely.  Such results  have been extended and modified to understand the (random) transmission of some biological characteristic of dividing cells such as cellular aging, cell damages, parasite infection... In particular, \cite{ban} considered non ergodic Markov chains and rare events associated to the Markov chain for cell division with parasite infection. In \cite{delmar}, the authors considered a Markov chain indexed by a Galton-Watson tree, which is motivated by cellular aging when the cells may die. In the same vein, the almost sure convergence in the case of bifurcating autoregressive Markov chain is achieved in  \cite{bercal}   via  martingale arguments. Such results have been extended recently and one can see the works of De Saporta, G\'egout- Petit, Marsalle, Blandin and al. 
For biological motivations in this vein, we also refer to \cite{tad, sincl}. 

In this paper, we consider similar questions for the case where  both  the genealogical tree and the  Markov chain along the branches are time non-homogeneous. In particular, we are motivated by the fact that the cell division is affected by the media. This latter is often  time non-homogeneous, which may be due  to the variations of the available resources  or  the environment, a medical treatment... Such  phenomena are well known in biology from the classical studies of Gause about Paramecium  or Tilman about diatoms.  The cell genealogy may  be modeled by a Galton-Watson process in a varying (or random) environment. It is quite straightforward  
to extend the weak law of large numbers to the case of non-homogeneous genealogies if the branching events are symmetric and independant  (each child obtains an i.i.d. copy) and the Markov chain along the branches is time homogeneous.  
However, as the convergence of non-homogeneous Markov chains is a delicate problem, we need  to consider new limit theorems to understand the evolution of the traits in the cell population. As stated in the next section, the asymptotic proportion can still  be  characterized as the stationary probability of an auxiliary Markov chain, in the same vein as \cite{delmar}. It yields a natural interpretation of the repartition of the traits as a stationary probability and the description of the lineage of a typical individual, which then can  be easily simulated.  
A large literature also  exists  concerning asymptotic behavior of even-odd Markov chains along  time homogeneous trees (see e.g. \cite{PhD}), with different motivations. 
We stress that in our model the trait of the cell does not influence its division, which means  that the genealogical tree may be random but does not depend on the evolution of the Markov chain along its branches. When such a dependence  holds (in continuous time, with fixed environments), some many to one formulas can be found in \cite{harrisroberts} and asymptotic proportions were briefly considered in \cite{bantran}. \\

Letting the trait be (replaced by) the location of the individual, the process considered here is more usually called a \emph{Branching Markov Chain}. The particular case that the motion of each individual has  i.i.d. increments, i.e. \emph{branching random walks}, has been largely studied from the pioneering works of Biggins  \cite{biggins77, biggins772,b}. The density occupation, the law of large numbers, central limit theorems,  large deviations results and the positions of the extremal particles have been considered. 
These results, such as recurrence, transience or survival criteria, have been partially extended to random environment  both in time and space, see e.g. \cite{gant, muller, huang2, CP, CY, yosh, MN}.  
Here, we consider both the case of non i.i.d. displacements  and non homogeneous environments. We mainly focus   on (positive) recurrent branching random walks. This provides some tractable  models for  reproduction-dispersion of species which evolve in a spatially and temporally non-homogeneous environment and compact state space. With our assumptions, the time environment may influence both the reproduction and the dispersion. 
  One can figure out the effects of the humidity and the enlightenment for the reproduction of plants and the wind for pollination. The spatial environment (such as the intensity of wind, the relief...) may here influence only  the dispersion. The fact that the space location does not influence the reproduction events  requires  space homogeneity of some environmental parameters, such as the light exposure, quality of the ground. An extension of our results to space dependent reproduction is a challenging problem. As a motivating article in this vein in ecology,
 we refer for instance to \cite{frev}. \\

Finally, such models might be a first step to consider  evolution processes on larger time scale with time inhomogeneity. The trait would then correspond to a phenotype or  a genotype. The fact that the branching event does not depend on the trait (neutral theory) may hold in some cases or be used as a zero hypothesis, see for example \cite{LaPo}. More generally, the  non-homogeneity of the branching rates in the genealogies raises many difficulties but has various motivations. As an example,   we refer to \cite{stadler} for discussions on  time non-homogeneity for extinction and speciation. \\

In next two sections, we state the results of this paper and consider some applications. Firstly (Section 2), we give a very general statement  which ensures the convergence in probability of the proportions of individuals with a given trait as time goes to infinity.  The fact that the common ancestor of two individuals is not recent yields a natural setting for law of large numbers. Here the genealogical tree may  be very general but the assumptions that we need are  often  not satisfied and the asymptotic proportions are not explicit.
That's why we focus  next on time non-homogeneous tree with branching properties (Section 3). That allows us to get a many to one formula, in the same vein as \cite{guy, bantran, harrisroberts, krell} (see Lemma \ref{LLL3.2}). We can then state
 a forward law of large numbers, which requires ergodic convergence of an auxiliary  time non-homogeneous Markov chain.  This latter is still not easily  satisfied. Thus, we give a backward analog of this result, which yields quite general sufficient conditions linked to ergodicity of Markov chains with stationary Markov transitions \cite{Orey}. To get assumptions which are easy to check and applications with stationary ergodic environments, we also provide  a weak law of large numbers for the whole population. 
Finally, we derive a central limit theorem  and apply it to some branching random walks in random environments. The rest of the paper (Sections 4 and 5) is dedicated to the proofs.\\

\paragraph{Notations.}
In the whole paper, we need the following notations. If $u=u_1\cdots u_n$ and $v=v_1\cdots v_m$, then $\vert u\vert=n$ is the length of $u$ and $uv=u_1\cdots u_nv_1\cdots v_m$. We denote by 
$$\mathbb{T} \in \cup_{m=0}^{\infty} \{1,2,\cdots\}^m$$
the generation tree rooted at $\emptyset$ and we define by
 $$\T_n:=\{u\in\T:|u|=n\}$$
 the set of all individuals in generation $n$.  Let
$$Z_n=\sum_{u\in\T_n}\delta_{X(u)}$$
be the counting measure of particles of generation $n$. In fact, for any measurable set $A$ of $\mathcal X$,
$$Z_n(A):=\#\{u\in\T_n:X(u)\in A\}$$
denotes the number of individuals whose trait belongs to $A$. Our aim is to obtain the asymptotic behavior of this quantity.
In particular, we write $$N_n:=Z_n(\mathcal X)$$ and we   shall consider  the asymptotic proportion of individual whose trait belongs to $A$, which is given by $Z_n(A)/N_n$.

For two different individuals $u, v$ of a tree, write $u<v$ if $u$ is an ancestor of $v$, and denote by $u\wedge v$ the nearest common ancestor of $u$ and $v$ in the means that $|w|\leq |u\wedge v|$ if $w< u$ and $w< v$.


\section{Weak law of large numbers for   non-homogeneous trees}
\label{gen}

In this section, the genealogical tree $\T$ is fixed (non random).
We require that the size of the population in generation $n$ goes to infinity as $n\rightarrow \infty$.

We consider a transition kernel $(p^{(k)}(n)(x, dx_1,\cdots,dx_k) : k,n \geq 0)$. Then  the Markov chain $X$ along the tree $\T$  is specified recursively by 
\begin{eqnarray}
&&\mathbb E \left[ \prod_{u \in \T_n} F_u\left(X(u1),\cdots,  X(uN(u))\right)    \bigg| (X(u) : \vert u\vert  \leq n)\right] \nonumber \\
&&\qquad \qquad \qquad \qquad=\prod_{u \in \T_n} \int F_u(x_1,\cdots,x_{N(u)}) p^{N(u)}(n)(x,dx_1,\cdots dx_k). \label{constr}
\end{eqnarray}
where $(F_u : u\in \T) \in \mathcal{B}_b^{\T}$ and $ \mathcal B_b$ is the set of bounded  measurable functions from $\cup_{k\geq 0} \mathcal{X}^k$  to $\R$.

The trees rooted at $u$ are defined similarly:
$$\T(u):=\{ v : uv\in\T\}, \quad  \T_n(u):=\{v  :  uv\in\T,  \ |v|=n\}$$
 $$Z^{(u)}_n:=\sum_{uv\in\T_n(u)}\delta_{X(uv)}, \quad N_n(u):=Z^{(u)}_n(\mathcal{X}).$$


\begin{pr}\label{LLT1}
Let $A\in B_{\mathcal X}$. We assume that 
\begin{itemize}
\item[(i)] $N_n\rightarrow\infty$ as $n\rightarrow\infty$;
\item[(ii)]$\limsup_{n\rightarrow\infty}\mathbb{P}(|U_n\wedge V_n|\geq K)\rightarrow 0$ as $K\rightarrow\infty$, where $U_n$, $V_n$ are two  individuals uniformly and independently chosen in $\T_n$;
\item[(iii)] there exists $\mu(A)\in\mathbb R$ such that for all $u\in\T$ and $x \in \mathcal X$,
$$\lim_{n\rightarrow\infty}\mathbb{P}\left(X(U_n^{(u)})\in A\bigg|X(u)=x\right)=\mu(A),$$
where $U_n^{(u)}$ denotes an individual uniformly chosen in  $\T_n(u)$.\\
\end{itemize}
Then
 $$\frac{Z_n(A)}{N_n}\rightarrow \mu(A)\qquad \text{in $L^2$.}$$
\end{pr}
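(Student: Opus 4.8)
The plan is to prove the $L^2$ convergence by a second moment argument. Since the tree $\T$ is fixed, $N_n=\#\T_n$ is deterministic, so it suffices to show that the first moment converges to $\mu(A)$ and the second moment to $\mu(A)^2$; indeed then
$$\mathbb{E}\Big[\big(Z_n(A)/N_n-\mu(A)\big)^2\Big]=\mathbb{E}\big[(Z_n(A)/N_n)^2\big]-2\mu(A)\,\mathbb{E}\big[Z_n(A)/N_n\big]+\mu(A)^2\longrightarrow 0.$$
Let $U_n,V_n$ be independent and uniform on $\T_n$, chosen independently of the whole trait process $X$. Writing $Z_n(A)=\sum_{u\in\T_n}\mathbf 1_{X(u)\in A}$ and averaging gives $\mathbb{E}[Z_n(A)/N_n]=\mathbb{P}(X(U_n)\in A)$ and $\mathbb{E}[(Z_n(A)/N_n)^2]=\mathbb{P}(X(U_n)\in A,\,X(V_n)\in A)$. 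For the first moment, conditioning on $X(\emptyset)$ and noting that $U_n$ is uniform on $\T_n=\T_n(\emptyset)$, assumption (iii) with $u=\emptyset$ together with dominated convergence (integrating against $\nu$, everything bounded by $1$) yields $\mathbb{P}(X(U_n)\in A)\to\mu(A)$.

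The core of the argument is the second moment. I would decompose according to the \emph{branching configuration} of the pair: let $W_n=U_n\wedge V_n$ be the most recent common ancestor, and let $a_n,b_n$ be the two (necessarily distinct) children of $W_n$ with $W_na_n\le U_n$ and $W_nb_n\le V_n$. Conditionally on $\{W_n=w,\ a_n=a,\ b_n=b\}$ (which depends only on the choices $U_n,V_n$, hence is independent of $X$), the individual $U_n$ is uniform among the descendants of $wa$ in generation $n$ and, independently, $V_n$ is uniform among those of $wb$. Three structural facts then combine: (a) by the branching construction \eqref{constr}, given the sibling traits $X(wa),X(wb)$ the two sub-lineages evolve independently; (b) given $X(wa)=y$, the conditional law of $X(U_n)$ is precisely the one appearing in (iii) for the subtree rooted at $wa$, so $g_n^{wa}(y):=\mathbb{P}(X(U_n)\in A\mid X(wa)=y,\ \text{config})\to\mu(A)$, and likewise for $wb$; and (c) the joint law $\rho^{ab}$ of $(X(wa),X(wb))$ does \emph{not} depend on $n$. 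Consequently
$$\psi_n(w,a,b):=\mathbb{P}\big(X(U_n)\in A,\,X(V_n)\in A\mid W_n=w,\,a_n=a,\,b_n=b\big)=\int g_n^{wa}(y)\,g_n^{wb}(y')\,\rho^{ab}(dy,dy'),$$
and since $\rho^{ab}$ is a fixed probability measure while the integrand is bounded by $1$ and converges pointwise to $\mu(A)^2$, dominated convergence gives $\psi_n(w,a,b)\to\mu(A)^2$ for every fixed configuration.

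It remains to assemble these limits. Writing $\mathbb{P}(X(U_n)\in A,X(V_n)\in A)=\sum_{(w,a,b)}\mathbb{P}(W_n=w,a_n=a,b_n=b)\,\psi_n(w,a,b)$, I would split the sum at a fixed level $K$. For $|w|<K$ the set of configurations is finite (the tree is locally finite), so a finite maximum of null sequences shows $\sum_{|w|<K}\mathbb{P}(W_n=w,a_n=a,b_n=b)\,\psi_n(w,a,b)-\mu(A)^2\,\mathbb{P}(|W_n|<K)\to0$. For $|w|\ge K$ the corresponding sum is at most $\mathbb{P}(|W_n|\ge K)=\mathbb{P}(|U_n\wedge V_n|\ge K)$, whose $\limsup_n$ tends to $0$ as $K\to\infty$ by (ii); this tail also absorbs the diagonal event $U_n=V_n$. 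Letting $n\to\infty$ and then $K\to\infty$ yields $\mathbb{P}(X(U_n)\in A,X(V_n)\in A)\to\mu(A)^2$, which completes the proof. Assumption (i) guarantees that the average is taken over a genuinely growing population and underlies the validity of (ii).

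I expect the main obstacle to be points (b)--(c): one cannot condition merely on the trait $X(W_n)$ of the common ancestor, because the two children of $W_n$ carry correlated traits and the sub-lineages are only conditionally independent given the \emph{pair} $(X(wa),X(wb))$. The right level of conditioning is the full configuration $(w,a,b)$, which renders the sibling law $\rho^{ab}$ independent of $n$ and thereby lets (iii) be fed into a dominated-convergence argument; getting this conditioning exactly right, and securing the uniformity needed to pass from fixed configurations to the finite sum over $|w|<K$, is the delicate part.
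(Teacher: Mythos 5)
Your proof is correct and follows essentially the same route as the paper's: the paper also expands the second moment, decomposes the cross terms over the most recent common ancestor $w$ and its two relevant children $wi,wj$ (your configuration $(w,a,b)$, with your $\mathbb{P}(W_n=w,a_n=a,b_n=b)$ being exactly the paper's weight $a_{n,r}(wi,wj)=N_{n-r-1}(wi)N_{n-r-1}(wj)/N_n^2$), uses the conditional independence of the two sub-lineages given the sibling traits together with assumption (iii) and dominated convergence for the finitely many configurations below level $K$, and controls the tail $|w|\geq K$ via (ii). The only cosmetic difference is that the paper centers $f=\mathbf{1}_A-\mu(A)$ and bounds $\mathbb{E}\bigl[(\sum_{u\in\T_n}f(X(u))/N_n)^2\bigr]$ directly (as a lemma for general bounded $f$), whereas you compute the first and second moments of $Z_n(A)/N_n$ via the sampled pair $(U_n,V_n)$.
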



The Assumption $(ii)$  means that  the common ancestor of two individuals chosen randomly is at the be beginning of the tree. Then assumption $(iii)$  ensures that any sampling is giving the same distribution.
 The assumptions $(i-ii)$ hold for many classical genealogies, such as branching genealogies (see below), Wright Fischer (or Moran) genealogies when we let the size of the population $N$ go to infinity.
The assumption $(iii)$ is  difficult to obtain in general. We first give a simple example where it holds. The next section is giving better sufficient conditions, in the branching framework.
 We also  provide below a result which weaken the Assumption $(ii)$, since the most recent common ancestor can be in the middle of the tree. It requires a stronger ergodicity along the branches than Assumption $(iii)$.

\paragraph{Example 2.1. Symmetric independent kernels.} The Assumption $(iii)$ becomes clear in the symmetric and homogeneous case. More precisely it holds if  
$$p^{(k)}(n)(x, dx_1,\cdots,dx_k)=\prod_{i=1}^k p(x,dx_i)$$
and  $\mathbb P_x(Y_n \in A) \rightarrow \mu(A)$ as $n\rightarrow \infty$, for every $x$, where $Y_n$ is a Markov chain with transition kernel $p$. This problem is related to the ergodicity of $Y$. Sufficient conditions for the ergodicity of a Markov chain are known in literature, and we refer e.g. to 
\cite{MT}. \\


\begin{pr}\label{LLT11}
Let $A\in B_{\mathcal X}$. We assume that 
\begin{itemize}
\item[(i)] $N_n\rightarrow\infty$ as $n\rightarrow\infty$;
\item[(ii)]$\limsup_{n\rightarrow\infty}\mathbb{P}(|U_n\wedge V_n|\geq n-K)\rightarrow 0$ as $K\rightarrow\infty$, where $U_n$, $V_n$ are two  individuals uniformly and independently chosen in $\T_n$;
\item[(iii)] there exists $\mu(A)\in\mathbb R$ such that 
$$\lim_{n\rightarrow\infty}\sup_{u\in\mathbb T,x\in \mathcal X}\left|\mathbb{P}\left(X(U_n^{(u)})\in A\bigg|X(u)=x\right)-\mu(A)\right|=0,$$
where $U_n^{(u)}$ denotes an individual uniformly chosen in  $\T_n(u)$.\\
\end{itemize}
Then
 $$\frac{Z_n(A)}{N_n}\rightarrow \mu(A)\qquad \text{in $L^2$.}$$
\end{pr}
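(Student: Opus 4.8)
The plan is to obtain $L^2$ convergence from the convergence of the first two moments, using the identity
\[
\mathbb E\!\left[\Big(\tfrac{Z_n(A)}{N_n}-\mu(A)\Big)^2\right]=\mathbb E\!\left[\Big(\tfrac{Z_n(A)}{N_n}\Big)^2\right]-2\mu(A)\,\mathbb E\!\left[\tfrac{Z_n(A)}{N_n}\right]+\mu(A)^2 .
\]
Since $\T$ is fixed and the sampled individuals $U_n,V_n$ are chosen uniformly and independently of the trait process, the moments are exactly sampling probabilities: $\mathbb E[Z_n(A)/N_n]=\mathbb P(X(U_n)\in A)$ and $\mathbb E[(Z_n(A)/N_n)^2]=\mathbb P(X(U_n)\in A,\,X(V_n)\in A)$, where $U_n,V_n$ are independent and uniform on $\T_n$. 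It therefore suffices to prove that these converge to $\mu(A)$ and $\mu(A)^2$ respectively. The first is immediate from (iii): taking $u=\emptyset$ (so $U_n^{(\emptyset)}=U_n$) gives $\sup_x|\mathbb P(X(U_n)\in A\mid X(\emptyset)=x)-\mu(A)|\to 0$, and integrating against the law $\nu$ of $X(\emptyset)$ yields $\mathbb P(X(U_n)\in A)\to\mu(A)$.

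The heart of the proof is the second moment. I would condition on the most recent common ancestor $W:=U_n\wedge V_n$, on its trait $X(W)$, and on the two distinct children $Wi,Wj$ through which $U_n$ and $V_n$ descend. On this conditioning two facts hold: by the branching property the subtrees rooted at $Wi$ and $Wj$ evolve independently given $X(W)$, so $X(U_n)$ and $X(V_n)$ are conditionally independent; and — the point needing care — conditionally on $W$ and on the separating child $Wi$, the individual $U_n$ is \emph{uniformly} distributed among the descendants of $Wi$ in generation $n$, i.e.\ distributed as $Wi\,U_{n-|W|-1}^{(Wi)}$. This restored uniformity lets me write each conditional marginal as
\[
\mathbb P\!\left(X(U_n)\in A\mid W,X(W)=y,\ U_n\to Wi\right)=\int \mathbb P\!\left(X(Wi)\in dy'\mid X(W)=y\right)\,\phi_{n-|W|-1}(Wi,y'),
\]
where $\phi_m(w,y):=\mathbb P(X(wU_m^{(w)})\in A\mid X(w)=y)$ is precisely the quantity governed by the uniform ergodicity (iii).

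I then split according to whether $|W|\le n-K$ or $|W|> n-K$, for a fixed $K$. On the first event both lineages run at least $K-1$ generations after splitting, so by (iii) each conditional marginal lies within $\rho(K-1):=\sup_{m\ge K-1,\,w,\,y}|\phi_m(w,y)-\mu(A)|$ of $\mu(A)$; by conditional independence the conditional joint probability is then within $3\rho(K-1)$ of $\mu(A)^2$. The second event has probability at most $\mathbb P(|W|\ge n-K)$ and is bounded crudely by $1$. This gives, for every fixed $K$,
\[
\big|\mathbb P(X(U_n)\in A,\,X(V_n)\in A)-\mu(A)^2\big|\ \le\ 3\rho(K-1)+2\,\mathbb P(|W|\ge n-K).
\]
Taking $\limsup_{n\to\infty}$ and then $K\to\infty$ annihilates both terms: the first by uniform ergodicity (iii), the second by assumption (ii). The coincidence event $\{U_n=V_n\}$, of probability $1/N_n\to 0$ by (i), is contained in $\{|W|\ge n-K\}$ and needs no separate treatment. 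Combining with the first moment gives the $L^2$ limit.

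I expect the main obstacle to be exactly the uniformity point above: conditioning on $W$ alone does \emph{not} make the sampled descendant uniform in its subtree, so (iii) cannot be invoked directly; the remedy is to condition additionally on the separating child, which recovers uniform sampling within the relevant sub-subtree at the cost of a single generation. A secondary delicate point is the order of limits — one must produce a bound valid uniformly in large $n$ for each fixed $K$ and only then send $K\to\infty$, so that the weakened ancestor condition (ii) (common ancestor not near the leaves) and the strengthened uniform ergodicity (iii) are used in tandem, in contrast with Proposition~\ref{LLT1} where the common ancestor sits near the root and pointwise ergodicity over finitely many fixed nodes suffices.
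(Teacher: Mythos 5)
Your proposal is correct and follows essentially the same route as the paper's own proof (Lemma \ref{LLNP11}): expand the second moment, decompose over the generation of the most recent common ancestor and the two separating children, cut at generation $n-K$, control the near-leaf part by assumption (ii) and the remaining part by the uniform ergodicity (iii), sending $n\to\infty$ before $K\to\infty$. Your probabilistic phrasing — restoring uniform sampling by conditioning on the separating child — is exactly what the paper encodes with the weights $a_{n,r}(wi,wj)=N_{n-r-1}(wi)N_{n-r-1}(wj)/N_n^2$, so the two arguments coincide.
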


We note that the assumption $(ii)$  is satisfied  for any tree $\T$ where each individual has at most $q$ (constant) offspring.
Considering the  symmetric and homogeneous case described in Example 2.1, $(iii)$ is satisfied when a strong ergodicity holds. It it the case for example in finite state space or under Doeblin type conditions. In this situation,   Proposition \ref{LLT11} can be applied.

\section{Quenched Law of large numbers for branching Markov chains in random environment}
\label{LLNBP}
In this section, the genealogical tree $\T$  may be  random.  The the population evolves following a branching process in  random environment (BPRE), described as follows. Let $\xi=(\xi_0,\xi_1,\cdots)$ be a sequence of random variables taking values in some measurable space $\Omega$, which will come in applications below from a  stationary and ergodic process. Each $\xi_n$ corresponds to a probability distribution on $\mathbb N=\{0,1,2,\cdots\}$, denoted by $p(\xi_n)=\{p_k(\xi_n):k\geq0\}$. This infinite vector  $\xi$ is called a \emph{random environment}. 


We consider now random measurable transition kernels $(p^{(k)}_{\xi_n}(x, dx_1,\cdots,dx_k) : k,n \geq 0)$, which are indexed by the $n$th environment component $\xi_n$.
 The process $X$ is a Markov chain along the random tree $\T$ with transition kernels $p$.  Conditionally on $(\xi,\T)$, the process is constructed following $(\ref{constr})$.
More specifically, the successive offspring  distributions are $\{p(\xi_{n})\}$, so that  the number of offspring $N(u)$ of individual $u$ of generation $n$
is distributed as 
$p(\xi_n)$ and the traits of its offspring $\{X(ui)\}$ are determined by
\begin{eqnarray*}
&&
\mathbb{P}_\xi(X(u1)\in dx_1,\cdots, X(uk)\in dx_k|N(u)=k,X(u)=x)\\
&& \qquad \qquad \qquad \qquad \qquad \qquad \qquad \qquad \qquad \qquad =p^{(k)}_{\xi_n}(x, dx_1,\cdots, dx_k).
\end{eqnarray*}
We note that the offspring number $N(u)$ does not depend on the parent's trait $X(u)$, and the offspring traits $\{X(ui) :i=1,\cdots, N(u)\}$ may depend on   $N(u), X(u)$ and $\xi_n$.

Given $\xi$, the conditional probability will be denoted by
$\mathbb{P}_{\xi}$ and the corresponding expectation by $\mathbb{E}_{\xi}$. The total
probability will be denoted by $\mathbb{P}$ and the corresponding expectation
by $\mathbb{E}$. As usual, $\mathbb{P}_\xi$ is called \emph{quenched law}, and $\mathbb{P}$ \emph{annealed law}.

Let $\mathcal {F}_0=\mathcal
{F}(\xi)=\sigma(\xi_0,\xi_1,\cdots)$ and $\mathcal
{F}_n=\mathcal
{F}_n(\xi)=\sigma(\xi_0,\xi_1,\cdots,(N(u):|u|<n))$ be the $\sigma$-field generated by the random
variables $N(u)$ with $|u|<n$, so that $N_n$, the size of the population in generation $n$,
  are $\mathcal {F}_n$-measurable.
Denote $$m_n=\sum_{k}p_k(\xi_n)\qquad\text{ for } n\geq0,$$
$$P_0=1\qquad\text{and}\qquad P_n=m_0\cdots m_{n-1}\;\;\text{for  } n\geq 1.$$
Thus, for every $n\in \mathbb N$,  $P_n=\mathbb{E}_\xi N_n$. It is well known that the normalized population size $$W_n=\frac{N_n}{P_n}$$ is a nonnegative martingale with respect to $\mathcal
{F}_n$, so  the limit $$W=\lim_{n\rightarrow\infty}W_n$$ exists a.s. and $\mathbb{E}_\xi W\leq1$.

In the rest of this section, we make the following assumptions

\begin{hyp}
\begin{itemize}
\item[(i)] The environment   $\xi=(\xi_0,\xi_1,\cdots )$ is a stationary ergodic sequence. 
\item[(ii)] We assume that $\mathbb P(m_0=0)=0$, $\mathbb P(p_0(\xi_0)=1)<1$  and $\mathbb E(\log m_0)<\infty.$
\item[(iii)] We focus on the  supercritical non degenerated case
\begin{equation}
\label{assBPRE}
\mathbb{E}(\log m_0)>0, \qquad  \mathbb{E}\left(\log\frac{\mathbb{E}_\xi N^2}{m_0^2}\right)<\infty.
\end{equation}
\end{itemize}
\end{hyp}

The first assumption allows to get asymptotic results on the size of the population. The second assumption avoids some degenerated cases.
Denoting by  $$q(\xi):=\mathbb P_\xi(N_n=0 \;\text{for some $n$})$$ the extinction probability, it is 
 well known that the non-extinction event $\{N_n \rightarrow\infty\}$ has quenched probability $1-q(\xi)$. Moreover,  
the condition  $\mathbb E(\log m_0)\leq 0$ implies that $q(\xi)=1 \ \text{a.s.},$
whereas $\mathbb E(\log m_0)>0$ (supercritical case) yields
$$q(\xi)<1 \qquad \text{a.s.}$$
The last assumption ensures that the random variable  $W$ is positive on the non-extinction event. We refer to \cite{at, atr} for the statements and proofs of these results. \\

\subsection{Forward weak law of large numbers in generation $n$}\label{LLNBPF}

We first give a forward law of large numbers in generation $n$ for the model introduced above, with the help of an auxiliary Markov process constructed as follows.
Let
$$P_{\xi_n}^{(k,i)}(x,\cdot)=p^{(k)}_{\xi_n}(x, \mathcal{X}^{i-1}\times\cdot\times\mathcal{X}^{k-i}) $$
and the random transition probability
$$Q_n(x,\cdot)=Q(T^n\xi;x,\cdot):=\frac{1}{m_n}\sum_{k=0}^\infty p_k(\xi_n)\sum_{i=1}^k P_{\xi_n}^{(k,i)}(x,\cdot).$$
We note that for each $\xi\in\Omega$, the Markov transition kernel $Q(\xi;\cdot,\cdot)$ is a function from $\mathcal X\times B_{\mathcal X}$ into $[0,1]$ satisfying:
\begin{itemize}
\item for each $x\in\mathcal X$, $Q(\xi;x,\cdot)$ is a probability measure on ($\mathcal X, B_{\mathcal X}$);
\item for each $A\in B_{\mathcal X}$, $Q(\xi;\cdot,A)$ is a $B_{\mathcal X}$-measurable function on $\mathcal X$.
\end{itemize}

Given the environment $\xi$, we define an auxiliary Markov chain in varying environment $Y$, whose 
transition probability in generation $j$ is  $Q_j$ :
$$\mathbb{P}_\xi(Y_{j+1}=y|Y_j=x)=Q_j(x,y).$$
As usual, we denote by $\mathbb P_{\xi,x}$ the quenched probability when the process $Y$ starts  from the initial value $x$,  and by $\mathbb E_{\xi,x}$ the corresponding expectation.

As stated below, the  convergence of the measure $Z_n(\cdot)$  normalized  comes from
the ergodic behavior of $Y_n$. In the same vein as \cite{delmar}, we have
\begin{thm}\label{LLNT3.1'}
Let $A\in B_{\mathcal X}$. We assume that there exists a sequence $(\mu_{\xi,n}(A))_n$  
such that for almost every $\xi$ and for each $r\in \mathbb N$,
\begin{equation}\label{LLET3.2b}
\lim_{n\rightarrow\infty}\mathbb{P}_{T^r\xi,x}(Y_{n-r}\in A)-\mu_{\xi,n} (A)=0\qquad \text{for every $x\in\mathcal{X}$,}
\end{equation}
where $T\xi=(\xi_1,\xi_2,\cdots)$ if $\xi=(\xi_0,\xi_1,\cdots)$.
Then we have for almost all $\xi$,
\begin{equation}\label{LLE3.2b}
\frac{Z_n(A)}{P_n}-\mu_{\xi,n}(A)W \rightarrow 0 \qquad \text{in $\mathbb{P}_\xi$-$L^2$,}
\end{equation}
and conditionally on the non-extinction event, 
\begin{equation}\label{LLE3.3b}
 \frac{Z_n(A)}{N_n} - \mu_{\xi,n}(A)\rightarrow 0 \qquad \text{in $\mathbb{P}_\xi$-probability.}
\end{equation}
\end{thm}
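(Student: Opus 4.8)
The plan is to reduce everything to a first- and second-moment analysis of $Z_n(A)$, built on the many-to-one formula (Lemma \ref{LLL3.2}) and on the $L^2$-theory of the martingale $W_n$. Throughout I work under $\mathbb P_\xi$ for a fixed typical environment $\xi$, so that $P_n$, $\mu_{\xi,n}(A)$ and $C(T^r\xi):=\sup_{m}\mathbb E_{T^r\xi}[W_m^2]$ are deterministic numbers. The key device is the subtree decomposition $Z_n(A)=\sum_{w\in\T_r}Z^{(w)}_{n-r}(A)$ for an auxiliary integer $r\le n$: conditionally on $\mathcal G_r$, the $\sigma$-field generated by the environment, the genealogy up to generation $r$ and the traits $(X(w):|w|\le r)$, the subtrees rooted at the $w\in\T_r$ are independent copies of the process driven by the shifted environment $T^r\xi$ and started from $X(w)$. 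Applying the many-to-one formula inside each subtree and using that the offspring numbers are independent of the traits gives
\begin{equation*}
\frac{\mathbb E_\xi[Z_n(A)\mid\mathcal G_r]}{P_n}=\frac1{P_r}\sum_{w\in\T_r}\mathbb P_{T^r\xi,X(w)}(Y_{n-r}\in A).
\end{equation*}

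For the $L^2$ statement \eqref{LLE3.2b} I would split
\begin{align*}
\frac{Z_n(A)}{P_n}-\mu_{\xi,n}(A)W
&=\underbrace{\frac{Z_n(A)-\mathbb E_\xi[Z_n(A)\mid\mathcal G_r]}{P_n}}_{(I)}
+\underbrace{\Big(\frac{\mathbb E_\xi[Z_n(A)\mid\mathcal G_r]}{P_n}-\mu_{\xi,n}(A)W_r\Big)}_{(II)}\\
&\quad+\underbrace{\mu_{\xi,n}(A)(W_r-W)}_{(III)},
\end{align*}
and estimate the three $L^2(\mathbb P_\xi)$-norms separately, the aim being an $\varepsilon/3$ argument in which $r$ is chosen first (controlling $(I)$ and $(III)$ uniformly in $n$) and $n\to\infty$ afterwards (killing $(II)$). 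For $(I)$, conditional independence of the subtrees yields $\mathrm{Var}_\xi(Z_n(A)\mid\mathcal G_r)\le\sum_{w\in\T_r}\mathbb E_{T^r\xi}[N_{n-r}^2]=N_r\,P_{n,r}^2\,C(T^r\xi)$ with $P_{n,r}=P_n/P_r$; taking $\mathbb E_\xi$ and dividing by $P_n^2$ gives $\|(I)\|_2^2\le C(T^r\xi)/P_r$, independently of $n$. For $(III)$, since the $\mu_{\xi,n}(A)$ are bounded, $\|(III)\|_2\le C\,\|W_r-W\|_2$, again independent of $n$. For $(II)$, with $r$ fixed $\T_r$ is a.s. finite and each summand $\mathbb P_{T^r\xi,X(w)}(Y_{n-r}\in A)-\mu_{\xi,n}(A)$ tends to $0$ by hypothesis \eqref{LLET3.2b}; as $|(II)|\le C\,W_r\in L^2(\mathbb P_\xi)$, dominated convergence gives $\|(II)\|_2\to0$ as $n\to\infty$.

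The crux is the uniform-in-$n$ smallness of $(I)$ and $(III)$ as $r\to\infty$, and this is where Assumption (iii) is genuinely used. The second-moment condition in \eqref{assBPRE} guarantees that $(W_m)$ is an $L^2(\mathbb P_\xi)$-bounded martingale, so $C(\xi)<\infty$ a.s. and $W_r\to W$ in $L^2(\mathbb P_\xi)$ with $\mathbb E_\xi W=1$, which handles $(III)$. For $(I)$ the point is that $C(T^r\xi)/P_r\to0$ for a.e.\ $\xi$: by the ergodic theorem $\tfrac1r\log P_r\to\mathbb E(\log m_0)>0$, so $P_r$ grows exponentially, whereas $(C(T^r\xi))_r$ is a stationary sequence with integrable logarithm (by \eqref{assBPRE}), whence $\tfrac1r\log^+C(T^r\xi)\to0$ and $C(T^r\xi)=e^{o(r)}$. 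This exponential-versus-subexponential comparison is the main obstacle; I expect it is cleanest to isolate the bound $\sup_m\mathbb E_\xi W_m^2<\infty$ together with its log-integrability as a preliminary lemma. Combining the three estimates through the triangle inequality then proves \eqref{LLE3.2b}.

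Finally, \eqref{LLE3.3b} follows from \eqref{LLE3.2b} by writing
\begin{equation*}
\frac{Z_n(A)}{N_n}-\mu_{\xi,n}(A)=\frac1{W_n}\Big[\Big(\frac{Z_n(A)}{P_n}-\mu_{\xi,n}(A)W\Big)+\mu_{\xi,n}(A)(W-W_n)\Big].
\end{equation*}
The bracket tends to $0$ in $\mathbb P_\xi$-probability (the first term by \eqref{LLE3.2b}, the second because the $\mu_{\xi,n}(A)$ are bounded and $W_n\to W$ a.s.), while on the non-extinction event $W_n\to W>0$ a.s.\ by Assumption (iii), so $1/W_n$ converges a.s.\ to a finite limit. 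Hence the quotient tends to $0$ in $\mathbb P_\xi$-probability conditionally on non-extinction, which is \eqref{LLE3.3b}.
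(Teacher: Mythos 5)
Your argument is correct in substance but organizes the second-moment analysis differently from the paper. The paper proves this theorem as a special case of Proposition \ref{LLP3.3} (with $f_{\xi,n}=\mathbf 1_A-\mu_{\xi,n}(A)$, $g=M$, $\alpha_n=M^2$, $\beta_n=\mathbb E_{T^n\xi}N^2$): it expands $\mathbb E_\xi\bigl[(\sum_{u\in\T_n}f_{\xi,n}(X(u))/P_n)^2\bigr]$ directly, decomposes the off-diagonal terms according to the generation $r$ of the most recent common ancestor, controls the terms with $r>K$ by the summability of $\beta_r/(P_rm_r^2)$ and the terms with $r\le K$ by (H4) plus dominated convergence, and only then converts $\mu_{\xi,n}(A)W_n$ into $\mu_{\xi,n}(A)W$. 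You instead condition on $\mathcal G_r$ and split into conditional variance $(I)$, bias $(II)$ and martingale tail $(III)$; this is the same dichotomy (pairs branching after generation $r$ are controlled by population growth, pairs branching before generation $r$ by ergodicity of $Y$) packaged as an $\varepsilon/3$ argument. Your version is more self-contained and probabilistically transparent; the paper's version buys generality, since Proposition \ref{LLP3.3} is reused for unbounded $f$, for the whole-tree theorem and for the central limit theorem. Your reduction of \eqref{LLE3.3b} to \eqref{LLE3.2b} via $1/W_n\to 1/W>0$ on non-extinction is exactly the paper's.

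One justification needs repair: you assert that $\log^+C(\xi)$ is integrable, with $C(\xi)=\sup_m\mathbb E_\xi W_m^2$, and deduce $C(T^r\xi)=e^{o(r)}$ from stationarity. Assumption (\ref{assBPRE}) gives log-integrability of $\mathbb E_\xi N^2/m_0^2$, not of $C(\xi)$, which is an infinite series in the environment and need not have an integrable logarithm under these hypotheses; and a stationary a.s.\ finite sequence without an integrable (logarithmic) moment need not be $e^{o(r)}$. The fix is to bypass this entirely: the standard $L^2$ computation for the branching process in varying environment gives
\begin{equation*}
\mathbb E_{T^r\xi}\bigl[W_m^2\bigr]=1+\sum_{j=0}^{m-1}\frac{\mathrm{Var}_{\xi_{r+j}}(N)}{m_{r+j}^2\,(P_{r+j}/P_r)},\qquad\text{so}\qquad \frac{C(T^r\xi)}{P_r}\le\frac{1}{P_r}+\sum_{k\ge r}\frac{\mathbb E_{T^k\xi}N^2}{m_k^2P_k},
\end{equation*}
and the right-hand side is the tail of exactly the series whose a.s.\ convergence follows from (\ref{assBPRE}) (it is the paper's $\sum_k\beta_k/(P_km_k^2)$). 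Hence $C(T^r\xi)/P_r\to0$ a.s., which is all your term $(I)$ requires. With this substitution your proof is complete.
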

 This forward result theorem is adapted to the underlying branching  genealogy. The proof is defered to the next section, where a more general is obtained. The condition (\ref{LLET3.2b}) 
 holds if the auxiliary Markov chain is weakly ergodic, for suitable sets $A$. For sufficient (and necessary) conditions of weak ergodicity in the non-homogeneous case, we refer in particular to \cite{mukh}. 
 
Let us now give more trackable results. We derive 
a first result of (quenched forward) weak law of large numbers, under a stronger assumption, and two  examples in simple cases (homogeneous case).

\begin{co}\label{LLT3.1}
Let $A\in B_{\mathcal X}$. We assume that
there exists $\mu(A) \in \R$ such that for almost all $\xi$,
\begin{equation}\label{LLET3.2}
\lim_{n\rightarrow\infty}\mathbb{P}_{\xi,x}(Y_n\in A)=\mu (A)\qquad \text{ for every $x\in \mathcal X$.}
\end{equation}
Then we have  for almost all $\xi$,
\begin{equation}\label{LLE3.2}
\frac{Z_n(A)}{P_n}\rightarrow \mu(A)W \qquad \text{in $\mathbb{P}_\xi$-$L^2$,}
\end{equation}
and conditionally on the non-extinction event,
\begin{equation}\label{LLE3.3}
\frac{Z_n(A)}{N_n}\rightarrow \mu(A)\qquad \text{in $\mathbb{P}_\xi$-probability.}
\end{equation}
\end{co}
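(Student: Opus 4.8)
The plan is to deduce this corollary directly from Theorem~\ref{LLNT3.1'} by taking the sequence $\mu_{\xi,n}(A)$ appearing there to be the constant $\mu(A)$. It then suffices to check that hypothesis~(\ref{LLET3.2b}) holds with this choice, after which the conclusions~(\ref{LLE3.2b}) and~(\ref{LLE3.3b}) specialize immediately to~(\ref{LLE3.2}) and~(\ref{LLE3.3}), since $\mu_{\xi,n}(A)=\mu(A)$ no longer depends on $n$.

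To verify~(\ref{LLET3.2b}), I would first observe that the auxiliary chain $Y$ under $\mathbb{P}_{T^r\xi,x}$ is exactly the auxiliary chain associated with the environment shifted by $r$: its transition in generation $j$ is $Q_j(T^r\xi;\cdot,\cdot)=Q(T^{j+r}\xi;\cdot,\cdot)$, so running $Y$ for $m$ steps under $T^r\xi$ uses the kernels $Q(T^{r}\xi),\dots,Q(T^{r+m-1}\xi)$. Consequently, for fixed $r$ the substitution $m=n-r\to\infty$ turns the target $\mathbb{P}_{T^r\xi,x}(Y_{n-r}\in A)$ into $\mathbb{P}_{T^r\xi,x}(Y_{m}\in A)$, which is precisely the quantity controlled by assumption~(\ref{LLET3.2}) applied to the environment $T^r\xi$. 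Hence $\lim_{n\rightarrow\infty}\mathbb{P}_{T^r\xi,x}(Y_{n-r}\in A)=\mu(A)$ for every $x$, which is~(\ref{LLET3.2b}) with $\mu_{\xi,n}(A)=\mu(A)$.

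The one point requiring care, and the step I would treat as the main obstacle, is that assumption~(\ref{LLET3.2}) is granted only for \emph{almost every} $\xi$, whereas~(\ref{LLET3.2b}) demands convergence along the shifted environments $T^r\xi$ for every $r\in\mathbb{N}$ simultaneously. Here I would invoke the stationarity from Assumption~(i): writing $G$ for the full-measure set of environments on which~(\ref{LLET3.2}) holds for all $x$, the shift $T$ is measure preserving, so each $T^{-r}G$ has full measure and therefore so does the countable intersection $\bigcap_{r\geq0}T^{-r}G$. For every $\xi$ in this intersection one has $T^r\xi\in G$ for all $r$, which is exactly what the preceding argument needs. With~(\ref{LLET3.2b}) verified on a set of full measure, Theorem~\ref{LLNT3.1'} applies and yields the two claimed convergences. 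Everything else is the direct substitution $\mu_{\xi,n}(A)=\mu(A)$ into~(\ref{LLE3.2b})--(\ref{LLE3.3b}), so no further estimates are needed.
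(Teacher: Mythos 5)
Your proof is correct and takes essentially the same route as the paper: the authors specialize Proposition~\ref{LLP3.4} with $f=\mathbf{1}_A$, $g=1$, $\alpha_n=1$ and $\beta_n=\mathbb{E}_{T^n\xi}N^2$, which rests on the same underlying $L^2$ computation (Proposition~\ref{LLP3.3}) that you reach by instead specializing Theorem~\ref{LLNT3.1'} with $\mu_{\xi,n}(A)=\mu(A)$. Your verification that hypothesis~(\ref{LLET3.2}) transfers to the shifted environments $T^r\xi$ for all $r$ simultaneously, via measure preservation of $T$ and a countable intersection of full-measure sets, is precisely the justification the paper's one-line proof leaves implicit in passing from its condition (H4') to (H4).
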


We now give two examples, where we can check (\ref{LLET3.2}) by considering models for which the associated auxiliary chain $Y$ is time homogeneous.
We also note  that assuming a uniform convergence in with respect to $x$, we can get an almost sure  convergence following the proof of Theorem $2$ in \cite{atr}.

\paragraph{Example 3.1. Homogeneous Markov chains along Galton Watson  trees.}
We focus here on the case when the time environment is non random, i.e. $\xi_n$ is constant for every $n\in\mathbb N$. The genealogical tree is a Galton Watson tree, whose offspring distribution is specified  by $\{p_k : k\geq  0\}$.
Moreover, we assume that
\begin{eqnarray*}
\mathbb{P}_{\xi}(X(u1)\in dx_1, \cdots, X(uk)\in dx_k|N(u)=k, X(u)=x)
=p^{(k)}(x, dx_1,\cdots,dx_k).
\end{eqnarray*}
does not depend on $\xi$. Then, denoting by $m$ the mean number of offspring per individual and
$$P^{(k,i)}(x,\cdot)=p^{(k)}(x, \mathcal{X}^{i-1}\times\cdot \times \mathcal X^{k-i}),$$
the auxiliary process $Y$ is a time homogeneous Markov chain whose transition kernel is given by
$$Q(x,\cdot)=\frac{1}{m}\sum_{k=0}^\infty p_k\sum_{i=1}^k P^{(k,i)}(x,\cdot).$$
Thus a law of large number in probability is obtained as soon as the ergodicity of the Markov chain $Y$ is proved. Indeed,
it ensures that there exists a probability measure $\mu$ such that for each $x\in\mathcal{X}$ and a measurable set $A$ such that $\mu(\partial A$\footnote{$\partial A$ is the boundary of $A$})$=0$, and $Q^n(x,A)=\mathbb P_x(Y_n\in A)\rightarrow\mu(A)$, so that we can apply Theorem \ref{LLT3.1}. 

We recall  that  sufficient conditions for the ergodicity of a Markov chain are known in the literatures, see e.g.   
\cite{MT}. This result is a simple generalization of law of large numbers on the binary tree in \cite{guy} and that on Galton Watson trees with at most two offsprings given in \cite{delmar}.
A continuous time analogous result can be found in  \cite{BDMT}.

\paragraph{Example 3.2. Symmetric homogeneous Markov chains along branching trees in  random environment.} 
We consider a Branching Markov Chain   on $\mathcal X$ in the stationary and ergodic environment $\xi$.
Given $\xi$, for each $u$ of generation $n$, the number of its offspring $N(u)$ is determined by distribution $p(\xi_n)=\{p_k(\xi_n):k\geq0\}$. The offspring positions $\{X(ui)\}$, independent of each other conditioned on the position of $u$, are determined by
$$\mathbb{P}_\xi(X(ui) \in dy|X(u)=x)=p(x,dy),$$
where $p(x,\cdot)$ is a probability  on $\mathcal{X}$.  We note that this example is a particular case of  Example 2.1. We can see that
\begin{eqnarray*}
 p_{\xi_n}^{(k)}(x,dx_1,\cdots,dx_k) &=&
\prod_{i=1}^k\mathbb{P}_\xi(X(ui)\in dx_i|X(u)=x)=\prod_{i=1}^kp(x,dx_i).
\end{eqnarray*}
Therefore, $Q(x,dy)=Q_n(x,dy)=p(x,dy)$ and $Y_n$ is a  time-homogeneous Markov chain with transition probability $p$.
As in the previous example, the problem is reduced to the ergodicity of $Y$. 
 Such a class may be relevant  to model the dispersion of plants in spaces where the reproduction is homogeneous and the time environment only influences  the genealogy. \\

The convergence of the Markov chain (\ref{LLET3.2}) is difficult to get under general assumptions. Indeed,  soon as the auxiliary kernel $Q$ depends on $\xi$, the auxiliary process is time non-homogeneous and the forward convergence in distribution (\ref{LLET3.2}) won't hold in general\footnote{the reader could consider for example the case of an environment containing only two components whose associated transition matrices have different stationary  probability.}. It is the case  for most of the models we have in mind but the two previous examples. 
For such an ergodic convergence, the backward convergence is much more adapted (see \cite{Orey}). Moreover, general sufficient conditions can be found for ergodic (Birkhoff's) theorems and we can use in particular \cite{Orey,timo}. Thus,  we derive in the next subsection a backward law of large numbers and then  one on the whole tree.

\subsection{Backward law of large numbers in generation $n$}
Now we consider the environment $\xi$ time reversed. Thus, for each $n \in\mathbb N$, we define
$$\xi^{(n)}=(\xi_{n-1}, \cdots, \xi_0).$$
For each individual $u$ of generation $r$ ($0\leq r\leq n$), the number of its offspring $N(u)$ is determined by the distribution $p(\xi_{n-r-1})$, and the positions of its offspring $\{X(ui)\}$ are determined by the transition kernel $p_{\xi_{n-r-1}}$ defined previously. 
 
To distinguish from the forward case, we denote the counting measure of generation $k$ by $Z_k^{(n)}(\cdot)$, the population size of generation $k$ by $N_k^{(n)}$, and its normalization by $$W_k^{(n)}=\frac{N_k^{(n)}}{\mathbb{E}_{\xi^{(n)}}(N_k^{(n)})} $$ for $0\leq k\leq n$.   We remark that unlike the forward case, here the normalize population $W_n^{(n)}$ is not a  martingale, hence the existence of its limit is not ensured. But when the environment is reversible in law, i.e.
$$(\xi_0, \cdots, \xi_{n-1})\stackrel{d}{=}(\xi_{n-1}, \cdots , \xi_0),$$
then
$W_n^{(n)}$  has the same distribution as $W_n$ under the total probability $\mathbb P$.

The following theorem is a law of large numbers in generation $n$ for the backward case, whose proof is deferred to next section.

\begin{co}\label{LLT3.2b}
Let $A\in B_{\mathcal X}$. We assume that  for almost all $\xi$, 
$$m_0\geq a>1,\qquad \mathbb{E}_\xi N^p\leq b$$ for some constants $a,b$ and $p>2$. We also assume  that there exists $\mu_\xi(A) \in \R$ such that   
\begin{equation}\label{LLET32e}
\lim_{n\rightarrow\infty}\mathbb{P}_{\xi^{(n)},x}(Y_n\in A)=\mu_\xi(A)\qquad \text{for every  $x\in\mathcal {X}$.}
\end{equation}
Then  we have  for almost all $\xi$,
\begin{equation}\label{LLNTBE1}
\frac{Z_n^{(n)}(A)-\mu_\xi(A)N_n^{(n)}}{P_n}\rightarrow 0  \;\; \text{in   $\mathbb{P}_{\xi}$-$L^2$.}
\end{equation}
Moreover, if the environment is reversible in law
 then
\begin{equation}\label{LLNeT3.2}
\mathbf{1}_{\{N_n^{(n)}>0\}}\left[\frac{Z_n^{(n)}(A)}{N_n^{(n)}}-\mu_\xi(A)\right]\rightarrow 0 \qquad \text{in $\mathbb{P}$-probability.}
\end{equation}
\end{co}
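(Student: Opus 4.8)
The plan is to run a second-moment argument tailored to the reversed environment, and then to pass from the deterministic normalisation $P_n$ to the random one $N_n^{(n)}$ using reversibility. Throughout set $g=\mathbf{1}_A-\mu_\xi(A)$, so that $Z_n^{(n)}(A)-\mu_\xi(A)N_n^{(n)}=\sum_{|u|=n}g(X(u))=:\bar Z_n$ (the sum being over the tree built in the reversed environment), and write $h_m(y):=\mathbb{P}_{\xi^{(m)},y}(Y_m\in A)-\mu_\xi(A)$, so that $|h_m|\le 1$ and, by (\ref{LLET32e}), $h_m(y)\to 0$ for every $y$. The forward Theorem \ref{LLNT3.1'} cannot be quoted directly, since reversing time makes the environment governing each generation depend on $n$; the estimate must be redone. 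For the first moment I would apply the many-to-one formula (Lemma \ref{LLL3.2}) to $\xi^{(n)}$, noting that the product of the means is insensitive to reordering so that $\mathbb{E}_{\xi^{(n)}}N_n^{(n)}=P_n$; this gives $\mathbb{E}_\xi[\bar Z_n]=P_n\,h_n(\nu)$, hence $\mathbb{E}_\xi[\bar Z_n]/P_n\to 0$. It then suffices to show $\mathbb{E}_\xi[\bar Z_n^2]/P_n^2\to 0$.

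The second step is to expand $\mathbb{E}_\xi[\bar Z_n^2]=\sum_{u,v}\mathbb{E}_\xi[g(X(u))g(X(v))]$ and split the pairs according to $k=|u\wedge v|$. The diagonal is at most $\|g\|_\infty^2\,\mathbb{E}_\xi N_n^{(n)}=\|g\|_\infty^2 P_n$, which after division by $P_n^2$ tends to $0$ because $m_0\ge a>1$ forces $P_n\ge a^n\to\infty$. For a pair coalescing at $w$ with $|w|=k$ through two distinct children $wi,wj$, the branching property together with the many-to-one formula applied to the two subtrees (which carry the shifted reversed environment) turns each subtree sum into $P_{n-k-1}h_{n-k-1}(X(wi))$, so that the contribution of level $k$ equals
\[
\frac{P_{n-k-1}^2}{P_n^2}\,\mathbb{E}_\xi\Big[\sum_{|w|=k}\sum_{i\ne j}h_{n-k-1}(X(wi))\,h_{n-k-1}(X(wj))\Big].
\]
Using $m_0\ge a$ and the moment bound $\mathbb{E}_\xi N^2\le(\mathbb{E}_\xi N^p)^{2/p}\le b^{2/p}$, together with $|h_{n-k-1}|\le 1$, this contribution is bounded by $C\,a^{-k}$ uniformly in $n$, for some constant $C=C(a,b,p)$.

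The delicate point, and the main obstacle, is to show that for each fixed $k$ this contribution tends to $0$ as $n\to\infty$: there $h_{n-k-1}\to 0$ only pointwise, while the law of $X(wi)$ genuinely depends on $n$ because reversal reshuffles which $\xi_j$ governs generation $k+1$. I would first bound the cross term by a single-particle quantity via Cauchy--Schwarz, namely $|\sum_{i\ne j}h_i h_j|\le (N(w)+1)\sum_i h_i^2$, and then apply many-to-one at generation $k+1$ (the weight $N(w)+1$ being integrable thanks to $\mathbb{E}_\xi N^p\le b$), reducing matters to an expression of the form $\mathbb{E}_{\xi^{(n)},\nu}[h_{n-k-1}^2(Y_{k+1})]$. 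Its linear analogue telescopes exactly, $\mathbb{E}_{\xi^{(n)},\nu}[h_{n-k-1}(Y_{k+1})]=\mathbb{P}_{\xi^{(n)},\nu}(Y_n\in A)-\mu_\xi(A)\to 0$; for the quadratic factor I would combine the pointwise vanishing of $h_{n-k-1}$, the bound $|h_{n-k-1}|\le 1$, and the uniform integrability furnished by $p>2$ to justify a bounded-convergence passage to the limit. Splitting $\sum_{k=0}^{n-1}$ at a level $K$ then finishes (\ref{LLNTBE1}): the tail $\sum_{k>K}C a^{-k}$ is small uniformly in $n$, and the finitely many remaining terms vanish as $n\to\infty$.

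Finally, for the reversible case I would deduce (\ref{LLNeT3.2}) from (\ref{LLNTBE1}). The latter yields $\bar Z_n/P_n\to 0$ in $\mathbb{P}_\xi$-$L^2$, hence in $\mathbb{P}_\xi$-probability for a.e.\ $\xi$, hence in $\mathbb{P}$-probability by bounded convergence. On $\{N_n^{(n)}>0\}$ one has $Z_n^{(n)}(A)/N_n^{(n)}-\mu_\xi(A)=(W_n^{(n)})^{-1}\bar Z_n/P_n$, so it remains only to keep $W_n^{(n)}$ bounded away from $0$ in probability. This is exactly where reversibility enters: $W_n^{(n)}\stackrel{d}{=}W_n$ under $\mathbb{P}$, and the forward martingale $W_n$ converges to $W$ with $W>0$ a.s.\ on non-extinction (Assumption (iii)), so $\mathbb{P}(0<W_n^{(n)}\le\delta)=\mathbb{P}(0<W_n\le\delta)\to\mathbb{P}(0<W\le\delta)$, which is small for small $\delta$. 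A standard $\varepsilon$--$\delta$ split, controlling the event $\{0<W_n^{(n)}\le\delta\}$ by the above and bounding the ratio by $\delta^{-1}|\bar Z_n|/P_n$ on its complement, then yields convergence in $\mathbb{P}$-probability.
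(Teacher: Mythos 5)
Your proposal is correct and follows the paper's proof in all essentials: the same second-moment expansion indexed by the generation of the most recent common ancestor, the same use of the many-to-one formula in the reversed environment to turn each subtree sum into $P_{n-r-1}$ times the iterated kernel $Q_{n-r-2}\cdots Q_0$ acting on $f_\xi$, the same geometric bound of order $a^{-r}$ on the level-$r$ contribution coming from $m_0\ge a>1$ and $\mathbb{E}_\xi N^2\le(\mathbb{E}_\xi N^p)^{2/p}\le b^{2/p}$, and the same reversibility argument $W_n^{(n)}\stackrel{d}{=}W_n$ combined with $\lim_{\alpha\downarrow 0}\mathbb{P}(0<W\le\alpha)=0$ to pass from the $P_n$-normalisation to the $N_n^{(n)}$-normalisation. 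The one place you diverge is the treatment of the finitely many fixed-$r$ terms: you reduce the cross term to a single-particle quantity via $\bigl|\sum_{i\ne j}h_ih_j\bigr|\le (N(w)+1)\sum_i h_i^2$ and then invoke uniform integrability from $p>2$, whereas the paper splits on the event $\{N_{r+1}^{(n)}\le K\}$, handles the small-population part by bounded convergence over at most $K$ labelled particles, and controls the large-population part through the uniform bound $\sup_{0\le r\le n}\mathbb{E}_{\xi^{(n)}}[W_{r+1}^{(n)}]^p\le C_p$ borrowed from Huang--Liu; these are interchangeable truncation devices and neither buys more than the other. Note that both treatments rest on the same delicate limit interchange that you rightly single out as the main obstacle: the law of the generation-$(r+1)$ positions still depends on $n$ through the reversed environment, so the pointwise vanishing of $h_{n-r-1}$ together with $|h_{n-r-1}|\le 1$ does not by itself license the dominated-convergence step; the paper invokes bounded convergence at this point just as tersely as you do.
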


 

Thanks to \cite{Orey} (see in particular Theorem 5.5.), we can check when  the Assumption (\ref{LLET32e}) is satisfied.
It requires that the environment is ergodic,   which holds here. As expected, the author also need the uniqueness of the invariant probability and some irreducibility assumptions. He also requires that some $\delta^*$ is equal to $0$, which is much more technical to get. As a simple case  where such assumptions hold, the author gives the case of finite state space, which yields the following example.


\paragraph{Example 3.3.  Multitype branching processes in random environment.} In the case when $X(u)$ belongs to a finite state space $\mathcal X$ for every $u$, the process considered here is a multitype  branching processes in random environment, where the reproduction law of each individual does not depend on its type, but the offspring distribution does. If the kernel $Q$ is irreducible, we get the quenched convergence in probability of the proportion of each type. This asymptotic proportion is identified as the stationary measure of the auxiliary chain $Y$, and can be thus easily simulated. For the growth rate of the whole population in the case when the reproduction may depend on the type, we refer to \cite{tan} for such results under stability assumptions. \\


To get weaker assumptions (of the Doeblin type) which can be satisfied for Markov chains in stationary random environment, we   focus now on limit theorems  the whole tree.


\subsection{Law of large numbers on the whole tree}
In the framework  of Markov chains with stationary and ergodic environments, quenched ergodic theorems  are known (see e.g. \cite{Orey, timo}). They  ensure the convergence (for every $x\in \mathcal X$) of
$$\frac{1}{n}\sum_{k=1}^{n} Q_0\cdots Q_{k-1}(x,\cdot) \qquad \text{as n} \rightarrow \infty.$$
It leads us to consider the following limit theorems on the whole tree, where each generation of the tree has the same mean weight in the limit. Such an approach is both adapted to the branching (forward) genealogy and the convergence of the underlying auxiliary time non-homogeneous Markov chain $Y$, whose transition are stationary and ergodic. It defers from the usual limit theorem on the whole tree  \cite{guy,delmar} where each cell has the same weight, but not each generation.

\begin{thm}\label{LLT3.3}
Let $A\in B_{\mathcal X}$. We assume that
 there exists  $\mu(A) \in \R$ such that  for almost all $\xi$,
\begin{equation}\label{LLET3.3}
\lim_{n\rightarrow\infty}\frac{1}{n} \sum_{k=1}^n\mathbb{P}_{\xi,x}(Y_k\in A)=\mu (A)\qquad \text{for every $x\in\mathcal{X}$.} 
\end{equation}
Then we have for almost all $\xi$,
\begin{equation}\label{LLNEW1}
\lim_{n\rightarrow\infty}\frac{1}{n}\sum_{k=1}^{n}\frac{Z_k(A)}{P_k}=\mu(A)W\qquad \text{in $\mathbb{P}_\xi$-$L^2$.}
\end{equation}
and, on the non extinction event,
\begin{equation}\label{LLNEW2}
\lim_{n\rightarrow\infty}\frac{1}{n}\sum_{k=1}^{n}\frac{Z_k(A)}{N_k}=\mu(A)\qquad \text{in $\mathbb{P}_\xi$-probability.}
\end{equation}
\end{thm}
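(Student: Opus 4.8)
The plan is to establish the $L^2$ statement \eqref{LLNEW1} first and then deduce the convergence in probability \eqref{LLNEW2} from it. Write $S_n=\frac1n\sum_{k=1}^n Z_k(A)/P_k$ and $f_k=\mathbb{P}_{\xi,X(\emptyset)}(Y_k\in A)$. By the many-to-one formula (Lemma \ref{LLL3.2}) one has $\mathbb{E}_\xi[Z_k(A)]=P_k f_k$, so that $\mathbb{E}_\xi[S_n]=\frac1n\sum_{k=1}^n f_k\to\mu(A)$ by the Ces\`aro hypothesis \eqref{LLET3.3}. Since Assumption (iii) guarantees that $W_n\to W$ in $\mathbb{P}_\xi$-$L^2$ with $\mathbb{E}_\xi W=1$, the target $\mu(A)W$ has the same mean $\mu(A)$, and it suffices to prove $\mathbb{E}_\xi[(S_n-\mu(A)W)^2]\to 0$. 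Expanding the square, this reduces to the two limits $\mathbb{E}_\xi[S_n^2]\to\mu(A)^2\mathbb{E}_\xi[W^2]$ and $\mathbb{E}_\xi[S_nW]\to\mu(A)\mathbb{E}_\xi[W^2]$.

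The heart of the argument is a many-to-two computation for $\mathbb{E}_\xi[Z_k(A)Z_l(A)]$. For an ordered pair of individuals, one at generation $k$ and one at generation $l$ (say $k\le l$), I would split according to the generation $j=|u\wedge v|$ of their most recent common ancestor $w$: conditionally on the trait $X(w)$ the two remaining lineages evolve independently, each being an auxiliary chain $Y$ driven by the shifted environment $T^j\xi$ and started at $X(w)$. This yields, after normalising by $P_kP_l$, a diagonal term ($u=v$) plus a sum over $j$ of branching weights $w_j$ multiplied by a product of two conditional probabilities of the form $\mathbb{P}_{T^j\xi,X(w)}(Y_{k-j}\in A)$ and $\mathbb{P}_{T^j\xi,X(w)}(Y_{l-j}\in A)$. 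Summing over $1\le k,l\le n$ and dividing by $n^2$, the diagonal contribution is $O(1/n)$ and vanishes, while for each fixed split level $j$ the double Ces\`aro average factorises into the square of $\frac1n\sum_{m}\mathbb{P}_{T^j\xi,X(w)}(Y_{m}\in A)$.

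The key observation is then that \eqref{LLET3.3}, applied to the shifted environment $T^j\xi$ (valid for almost every $\xi$ simultaneously for all $j$ by stationarity, with the same $\mu(A)$ by ergodicity), forces each such average to converge to $\mu(A)$ for every starting point, so that the $j$-slice tends to $\mu(A)^2$. Interchanging the limit with the sum over $j$ requires dominating the branching weights $w_j$, which is exactly where the second-moment log-condition in \eqref{assBPRE} enters: it makes $\sum_j w_j<\infty$ and identifies the total weight with $\mathbb{E}_\xi[W^2]$ (the same sum that yields $\sup_k\mathbb{E}_\xi[W_k^2]<\infty$). Since the conditional probabilities are bounded by $1$, dominated convergence gives $\mathbb{E}_\xi[S_n^2]\to\mu(A)^2\mathbb{E}_\xi[W^2]$. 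An entirely analogous two-point computation, writing $\mathbb{E}_\xi[S_nW]=\lim_m\mathbb{E}_\xi[S_nW_m]$ (legitimate by $L^2$-boundedness) with one marked particle counted by $Z_k(A)$ and the whole generation $m$ counted by $N_m$, produces $\mathbb{E}_\xi[S_nW]\to\mu(A)\mathbb{E}_\xi[W^2]$, and \eqref{LLNEW1} follows.

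Finally, for \eqref{LLNEW2} I would write $\frac1n\sum_{k=1}^n Z_k(A)/N_k=\frac1n\sum_{k=1}^n (Z_k(A)/P_k)\,W_k^{-1}$ and compare it with $W^{-1}S_n$ on the non-extinction event, where $W>0$ a.s. Because $Z_k(A)/P_k\le W_k$, the error term is bounded by $\frac1n\sum_{k=1}^n|W_k-W|/W$, a Ces\`aro average of a sequence tending to $0$ (since $W_k\to W$ a.s.), hence negligible; combined with \eqref{LLNEW1} this gives $\frac1n\sum_k Z_k(A)/N_k\to\mu(A)$ in $\mathbb{P}_\xi$-probability. The main obstacle is the second-moment step: setting up the many-to-two decomposition in the varying environment correctly and, above all, justifying the interchange of the Ces\`aro limit with the sum over the split generation $j$ — one must control the conditional probabilities uniformly enough in the random split trait $X(w)$ and dominate the branching weights using \eqref{assBPRE}.
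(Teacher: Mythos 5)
Your proposal is correct, and its engine is the same as the paper's: a many-to-two computation split over the generation $j$ of the most recent common ancestor, the Ces\`aro hypothesis \eqref{LLET3.3} applied to the shifted environments $T^j\xi$ (justified, as you say, by stationarity and ergodicity), and a dominated-convergence interchange over $j$ using the summability of the weights $\frac{\mathbb{E}_{T^j\xi}[N(N-1)]}{P_jm_j^2}$ coming from \eqref{assBPRE}. Where you differ is in the bookkeeping. The paper (Proposition \ref{LLNPW}) centers first, working with $f=\mathbf{1}_A-\mu(A)$ and showing $\frac1n\sum_{u\in\Gamma_n}f(X(u))/P_{|u|}\to0$ in $L^2$; after centering each $j$-slice tends to $0$, so only an \emph{upper bound} on the weights is needed, and the value $\mu(A)W$ is then recovered from the separate Lemma \ref{LLNLL} ($\frac1n\sum_kW_k\to W$ in $L^2$). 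You instead match moments, proving $\mathbb{E}_\xi[S_n^2]\to\mu(A)^2\mathbb{E}_\xi[W^2]$ and $\mathbb{E}_\xi[S_nW]\to\mu(A)\mathbb{E}_\xi[W^2]$; this buys you independence from Lemma \ref{LLNLL}, but costs an \emph{exact} identification of $\sum_j\frac{\mathbb{E}_{T^j\xi}[N(N-1)]}{P_jm_j^2}$ with $\mathbb{E}_\xi[W^2]$ (which does hold, via $\mathbb{E}_\xi[W_n^2]=P_n^{-1}+\sum_{j<n}\frac{\mathbb{E}_{T^j\xi}[N(N-1)]}{P_jm_j^2}$ and the $L^2$ convergence of $W_n$) plus a second, genuinely extra, cross-moment computation $\lim_n\mathbb{E}_\xi[S_nW_m]=\mu(A)\mathbb{E}_\xi[W_m^2]$ with a uniform-in-$n$ passage $W_m\to W$. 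Two small points of hygiene rather than gaps: (a) after the split at $w$ the two lineages start from the traits of two \emph{distinct children} of $w$, which are generally correlated and different from $X(w)$, so the $j$-slice is really $\nu_\xi Q_0\cdots Q_{j-1}J_{T^j\xi}(R_{n,j}\otimes R_{n,j})$ with the paper's operator $J_\xi$ — harmless here since everything is bounded by $1$ and $J_{T^j\xi}(\mathbf{1}\otimes\mathbf{1})=\mathbb{E}_{T^j\xi}[N(N-1)]$; (b) besides the diagonal $u=v$, the ancestor--descendant pairs $u<v$ (the paper's $S_{n,\xi}$) must be listed explicitly and shown to be $O(1/n)$ via Lemma \ref{LLL3.2}, since they do not fit the two-independent-lineages picture. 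Your treatment of \eqref{LLNEW2} via $Z_k(A)/P_k\le W_k$ and the a.s.\ Ces\`aro convergence of $|W_k-W|/W$ on non-extinction is essentially identical to the paper's.
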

 
 The proof is also deferred to  next section.

\paragraph{Example 3.4. Doeblin conditions for the auxiliary Markov chain $Y$.} Assume that there exist a positive integer $b$ and a measurable function $M(\xi):\Omega \rightarrow [1,\infty)$ such that $\mathbb E|\log M(\xi)|<\infty$, and for almost all $\xi$,
$$\mathbb{P}_{\xi,x}(Y_b\in A)\leq M(\xi)\mathbb{P}_{\xi,y}(Y_b\in A)\qquad \text{for all $x,y\in\mathcal{X}$,}$$
which means that Assumption (A) of Sepp\"al\"ainen \cite{timo} is satisfied. According to  Theorem 2.8 $(i)$ and $(iii)$ of Sepp\"al\"ainen \cite{timo} (with $f=\mathbf 1_{A\times \Omega}$), there exists a probability $\Phi$ on $\mathcal X \times \Omega$ such that for almost all $\xi$,
$$\lim_{n\rightarrow\infty}\frac{1}{n}\sum_{k=1}^{n}\mathbf 1_A(Y_k)=\Phi(A\times \Omega)\qquad\text{$\mathbb P_{\xi,x}$-a.s. for every $x\in\mathcal X$. }$$
By the dominate convergence theorem, we have for almost all $\xi$,
$$\lim_{n\rightarrow\infty}\frac{1}{n}\sum_{k=1}^{n}\mathbf \mathbb \mathbb P_{\xi,x}(Y_k\in A)=\Phi(A\times \Omega)\qquad\text{ for every $x\in\mathcal X$. }$$
Thus (\ref{LLET3.3}) holds with $\mu(A)=\Phi(A\times \Omega)$, so we can use Theorem \ref{LLT3.3} to get (\ref{LLNEW1}) and (\ref{LLNEW2}).

\subsection{Central limit theorem}
When the auxiliary Markov chain $Y$ is a classical random walk on  $\mathcal X\subset \mathbb R$, we know that $Y_n$ satisfies a central limit theorem. Such results have been extended to the framework of random walk in random environment (see e.g. \cite{ali}) and some more general Markov chains (see e.g. \cite{gall}).
It leads us to state the convergence of proportions in the case when $Y_n$ satisfies a central limit theorem.
\begin{thm}\label{LLT3.5}Let  $\mathcal X\subset \mathbb R$.
We assume that  for almost all $\xi$, $Y_n$ satisfies a central limit theorem: there exits a sequence of random variables $\{(a_n(\xi), b_n(\xi)\}$ satisfying $b_n(\xi)>0$ such that
\begin{equation}\label{LLET3.5}
\lim_{n\rightarrow\infty}\mathbb{P}_{\xi,x}\left(\frac{Y_n-a_n(\xi)}{b_n(\xi)}\leq y\right)=\Phi(y)  \qquad \text{for every $x\in\mathcal{X}$,}
\end{equation}
where  $\Phi$ is a continuous function on $\mathbb R$.
If for each $r\in\mathbb N$ fixed,
\begin{equation}\label{LLCLTab}
\lim_{n\rightarrow\infty}\frac{b_n(\xi)}{b_{n-r}(T^r\xi)}=1\qquad\text{and}\qquad\lim_{n\rightarrow\infty}\frac{a_n(\xi)-a_{n-r}(T^r\xi)}{b_{n-r}(T^r\xi)}=0\quad a.s.,
\end{equation}
then we have for almost all $\xi$, 
\begin{equation}\label{LLE3.8}
\frac{Z_n(-\infty,b_n(\xi)y+a_n(\xi)]}{P_n}\rightarrow \Phi(y)W \qquad \text{in $\mathbb{P}_\xi$-$L^2$,}
\end{equation}
and conditionally on the non-extinction event, 
\begin{equation}\label{LLE3.9}
\frac{Z_n(-\infty,b_n(\xi)y+a_n(\xi)]}{N_n}\rightarrow \Phi(y)  \qquad \text{in $\mathbb{P}_\xi$-probability.}
\end{equation}
\end{thm}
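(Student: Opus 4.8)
The plan is to reduce the central limit theorem statement (Theorem~\ref{LLT3.5}) to an application of the forward law of large numbers already established in Theorem~\ref{LLNT3.1'}. The key observation is that the quantity of interest, $Z_n(-\infty, b_n(\xi)y + a_n(\xi)]$, counts individuals falling in a half-line whose endpoint shifts with $n$; this is precisely a measure $Z_n(A_n)$ evaluated on a \emph{moving} set $A_n = A_n(y) := (-\infty, b_n(\xi)y + a_n(\xi)]$. So the first step is to check that, despite the set moving with $n$, the hypothesis (\ref{LLET3.2b}) of Theorem~\ref{LLNT3.1'} holds with the candidate limit $\mu_{\xi,n}(A_n) = \Phi(y)$, a constant in $n$. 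Concretely, I would apply the many-to-one philosophy underlying Theorem~\ref{LLNT3.1'}, which expresses the normalized proportion in terms of the auxiliary chain $Y$ started from a node at generation $r$ and run for $n-r$ steps, i.e.\ in terms of $\mathbb{P}_{T^r\xi,x}(Y_{n-r} \in A_n)$.

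The central computation is therefore to show that, for each fixed $r$ and every $x$,
\begin{equation*}
\lim_{n\to\infty} \mathbb{P}_{T^r\xi,x}\bigl(Y_{n-r} \le b_n(\xi)y + a_n(\xi)\bigr) = \Phi(y).
\end{equation*}
Here the centering and scaling in the CLT hypothesis (\ref{LLET3.5}) are $a_{n-r}(T^r\xi)$ and $b_{n-r}(T^r\xi)$, whereas the set $A_n$ is defined with $a_n(\xi)$ and $b_n(\xi)$. The purpose of assumption (\ref{LLCLTab}) is exactly to reconcile these: I would rewrite the event $\{Y_{n-r} \le b_n(\xi)y + a_n(\xi)\}$ in standardized form as $\{(Y_{n-r} - a_{n-r}(T^r\xi))/b_{n-r}(T^r\xi) \le y_n\}$, where
\begin{equation*}
y_n = \frac{b_n(\xi)}{b_{n-r}(T^r\xi)}\, y + \frac{a_n(\xi) - a_{n-r}(T^r\xi)}{b_{n-r}(T^r\xi)}.
\end{equation*}
By (\ref{LLCLTab}), $y_n \to y$ as $n\to\infty$, and since $\Phi$ is continuous, the CLT (\ref{LLET3.5}) applied to $Y$ under $\mathbb{P}_{T^r\xi,x}$ gives convergence of the probability to $\Phi(y)$. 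Thus (\ref{LLET3.2b}) holds with $\mu_{\xi,n}(A_n) = \Phi(y)$ (independent of $n$), and Theorem~\ref{LLNT3.1'} immediately yields both (\ref{LLE3.8}) in $\mathbb{P}_\xi$-$L^2$ and (\ref{LLE3.9}) in $\mathbb{P}_\xi$-probability on non-extinction.

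The main obstacle I anticipate is the interchange of the moving set $A_n$ with the limit hypotheses of Theorem~\ref{LLNT3.1'}, which was stated for a \emph{fixed} set $A$. One must verify that the proof of Theorem~\ref{LLNT3.1'} (carried out in the later sections) genuinely accommodates a set that varies with $n$, or equivalently that the convergence in (\ref{LLET3.2b}) is the only input used and that it holds uniformly enough across the moving targets. A second, more delicate point is the passage from convergence at a continuity point to the conclusion: because $\Phi$ is assumed merely continuous (not strictly increasing), one should be careful that $y_n \to y$ combined with the possible non-monotonicity of the pre-limit distribution functions still forces the probabilities to converge; invoking continuity of $\Phi$ together with a squeezing argument using slightly perturbed thresholds $y \pm \varepsilon$ and the convergence (\ref{LLET3.5}) at $y\pm\varepsilon$ should close this gap. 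Finally, one should confirm that (\ref{LLCLTab}) indeed holds almost surely in $\xi$ so that the exceptional $\xi$-null sets from the hypotheses and from Theorem~\ref{LLNT3.1'} can be combined.
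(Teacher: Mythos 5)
Your proposal is correct and takes essentially the same route as the paper: the central computation — standardizing the event $\{Y_{n-r}\le b_n(\xi)y+a_n(\xi)\}$, using (\ref{LLCLTab}) to send the perturbed threshold $y_n\to y$, and invoking continuity of $\Phi$ — is exactly what the paper does, and the ``moving set'' obstacle you flag is resolved precisely as you anticipate, since the paper applies the underlying Proposition \ref{LLP3.3} (stated for $n$-dependent test functions $f_{\xi,n}$, here $f_{\xi,n}=\mathbf 1_{A_n}-\Phi(y)$) rather than Theorem \ref{LLNT3.1'} verbatim. The only cosmetic difference is that the paper handles the continuity-point issue by upgrading (\ref{LLET3.5}) to convergence uniform in $y$ via Dini's theorem for monotone functions, instead of your $y\pm\varepsilon$ squeeze; both are standard and equivalent.
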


\paragraph{Example 3.5. Branching random walk on $\mathbb R$  with random environment in time.}
This model is considered in Huang \& Liu \cite{huang2}. The environment $\xi=(\xi_n)_{n\in\mathbb N}$ is a stationary and ergodic process indexed by time $n\in\mathbb N$. Each realization of $\xi_n$ corresponds to a distribution $\eta_n=\eta_{\xi_n}$ on $\mathbb N\otimes \mathbb R^{\mathbb N}$. Given the environment $\xi$, the process is formed as follows: at time $n$, each particle $u$ of generation $n$, located at $X(u)\in\mathbb R$,  is replaced by $N(u)$ new particles of generation $n+1$ which scattered on $\mathbb R$ with positions determined by $X(ui)=X(u)+Li(u)$, where the point process $(N(u);L_1(u), L_2(u),\cdots)$ has distribution $\eta_n$. To fit with the notations of this paper, we can see that 
$$p_k(\xi_n)=\eta_n(k,\mathbb R\times \mathbb R\times \cdots),$$
$$p_{\xi_n}^{(k,i)}(x,y)=\eta_n(k,\mathbb{R}^{i-1}\times\{y-x\} \times \R^{k-i})=:q_{\xi_n}^{(k,i)}(y-x),$$
$$Q_n(x,y)=\frac{1}{m_n}\sum_{k=0}^\infty p_k(\xi_n)\sum_{i=1}^kq_{\xi_n}^{(k,i)}(y-x)=:q_n(y-x).$$
We note  that for any measurable function $f$ on $\mathbb R$, 
$$\int f(t) q_n(dt)=\frac{1}{m_n}\mathbb E_\xi \sum_{i=1}^{N(u)}f(L_i(u))\quad (u\in\T_n).$$
Hence $q_n$ is the normalized intensity measure of the point process $(N(u);L_1(u), L_2(u),\cdots)$ for $u\in\T_n$. We define
$$Y_n=\zeta_0+\zeta_1+\cdots+\zeta_n,$$
where $\zeta_j$ is independent of each other under $\mathbb P_\xi$ and the distribution of $\zeta_j$ for  $j\geq1$ is $q_j$. Then $Y_n$ is a non-homogeneous Markov chain, whose transition kernel  satisfies
$$\mathbb P_\xi(Y_{n+1}=y|Y_n=x)=q_n(y-x)=Q_n(x,y).$$ 
Let $\mu_n=\int_{\R} t q_n(dt)$ and  $\sigma_n^2=\int_{\R} (t-\mu_n)q_n(dt)$. If $|\mu_0|<\infty$ a.s. and $\mathbb E (\sigma_0^2)\in(0,\infty)$, according to Huang \& Liu \cite{huang2}, the sequence $(q_n)$ satisfies a central limit theorem:
$$q_1\ast\cdots\ast q_n(b_n(\xi)y+a_n(\xi))\rightarrow \Phi(y)\qquad a.s.,$$
where 
$$a_n(\xi)=\sum_{i=0}^{n-1} \mu_n, \qquad b_n(\xi)=\left(\sum_{i=0}^{n-1} \sigma^2_n\right)^{1/2}$$
 and $\Phi$ is the distribution function of the standard normal distribution. It follows 
that    (\ref{LLET3.5}) holds for almost all $\xi$. 
Moreover, by the ergodic theorem,  (\ref{LLCLTab}) can be verified. Thus we can apply  Theorem \ref{LLT3.5} to this model and obtain (\ref{LLE3.9}) under  the hypothesis given above. This result  can also be deduced from \cite{huang2}, where the almost sure convergence of (\ref{LLE3.9}) is shown though some tedious calculations. 

\paragraph{Example 3.6. Branching random walk on $\mathbb Z$ with random environment in time and in locations. } 
This model is considered in Liu \cite{liu07}. Let $\xi=(\xi_n)_{n\in\mathbb N}$ be a stationary and ergodic process denoting the environment in time, and $\omega=(\omega_x)_{x\in\mathbb Z}$, which denotes the environment in locations, be another stationary and ergodic process taking values in $[0,1]$. The two sequences $\xi$, $\omega$ are supposed to be independent of each other. Given the environment $(\xi,\omega)$, each $u$ of generation $n$, located at $X(u)\in\mathbb Z$, is replaced at time $n+1$ by $k$ new particles with probability $p_k(\xi_n)$, which move immediately and independently to $x+1$ with probability $\omega_x$ and to $x-1$ with probability $1-\omega_x$. Namely, the position of $ui$ is determined by
\begin{equation*}
\mathbb P_{(\xi,\omega)}(X(ui)=y|X(u)=x)=Q(x,y):=\left\{
\begin{array}{cc}
\omega_x&  \text{if $y=x+1$;}\\
1-\omega_x & \text{if $y=x-1$,}	\\
\end{array}\right.
\end{equation*}
where  $\mathbb P_{(\xi,\omega)}$ denotes the conditional probability given the environment $(\xi, \omega)$.
Notice that when the environment in locations $\omega$ is fixed, this process is the just one considered in Example 3.2 with the state space $\mathcal X=\mathbb Z$ and $p(x,y) =Q(x,y)$. So the transition probability of the Markov chain $Y_n$ is $Q$, which only depends on the environment in locations $\omega$ and is independent of the environment in time $\xi$. We can regard $Y_n$ as a random walk on $\mathbb Z$ in random environment which is studied in Alili \cite{ali}.  By Theorem 6.3 of  Alili \cite{ali} and the continuity of $\Phi$, 
under some hypothesis, we have for every $\omega$,
\begin{equation*}  \lim_{n\rightarrow\infty}\mathbb{P}_{\omega,x}\left(\frac{Y_n-n\gamma}{\sqrt{n}}\leq y\right)=\Phi(y)  \qquad \text{for every $x\in\mathbb{Z}$,}
\end{equation*}
where $\Phi$ is the distribution function of the normal distribution $\mathcal N(0,D)$, and $\gamma,D$ are two explicit constants (see \cite{ali} for details). Therefore, we can apply 
Theorem \ref{LLT3.5} and obtain (\ref{LLE3.8}) and (\ref{LLE3.9}) under the probability $\mathbb P_{(\xi, \omega)}$.

\section{Proof of Propositions \ref{LLT1} and \ref{LLT11}}

Proposition \ref{LLT1} is a consequence of the following result with $f(x)=\mathbf{1}_{A}(x)-\mu(A)$. It is extended hereafter  to some class of unbounded functions $f$.

\begin{lem}\label{LLNP1}
Let $f$ be a bounded measurable function on $\mathcal X$. We assume that
\begin{itemize}
\item[(i)]  $N_n\rightarrow\infty$ as $n\rightarrow\infty$;
\item[(ii)]$\limsup_{n\rightarrow\infty}\mathbb{P}(|U_n\wedge V_n|\geq K)\rightarrow 0$ as $K\rightarrow\infty$;
\item[(iii)]for all $u\in\T$ and $x \in \mathcal X$,
$$\lim_{n\rightarrow\infty}R_u(n,x)=0 ,$$
where
$$ R_u(n,x)
 =\mathbb{E}\left[\left.f(X(U_n^{(u)}))\right|X(u)=x\right].$$
\end{itemize}
Then
 $$\frac{\sum_{u\in\T_n}f(X(u))}{N_n}\rightarrow 0\qquad \text{in $L^2$.}$$
\end{lem}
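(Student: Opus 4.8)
The plan is to establish the $L^2$-convergence by a second-moment computation based on two independent uniform samples. Set $S_n:=N_n^{-1}\sum_{u\in\T_n}f(X(u))$ and $W_n:=U_n\wedge V_n$, where $U_n,V_n$ are the two individuals chosen uniformly and independently in $\T_n$ (independently of the traits $X$). Since the tree is fixed, $N_n$ is deterministic, and averaging over the sampling given the traits $X$ yields
$$\|S_n\|_{L^2}^2=\mathbb{E}\!\left[S_n^2\right]=\mathbb{E}\!\left[f(X(U_n))\,f(X(V_n))\right].$$
Hence there is nothing to center: it suffices to prove that the right-hand side tends to $0$.

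I would then split according to the depth of the most recent common ancestor. For a threshold $K$,
$$\mathbb{E}[f(X(U_n))f(X(V_n))]=\mathbb{E}[f(X(U_n))f(X(V_n))\mathbf{1}_{|W_n|<K}]+\mathbb{E}[f(X(U_n))f(X(V_n))\mathbf{1}_{|W_n|\ge K}].$$
The second term is bounded in absolute value by $\|f\|_\infty^2\,\mathbb{P}(|W_n|\ge K)$, which Assumption (ii) makes arbitrarily small, uniformly in large $n$, once $K$ is large; this term also absorbs the diagonal $U_n=V_n$ (where $|W_n|=n$), whose probability $1/N_n$ vanishes by Assumption (i). It then remains to show that, for each fixed $K$, the first term tends to $0$ as $n\to\infty$.

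For this shallow term I would condition successively on the common ancestor $w=W_n$ (at a generation $k<K$), on the two distinct children $wa,wb$ of $w$ through which $U_n$ and $V_n$ descend, and on the sibling traits $X(wa),X(wb)$. The branching property makes the subtrees rooted at $wa$ and $wb$ conditionally independent given $(X(wa),X(wb))$, while the independent uniform sampling shows that, conditionally on passing through $wa$, the sample $U_n$ is uniform among the descendants of $wa$ in generation $n$; identifying these conditional laws produces the functions $R_{\cdot}$ and the factorization
$$\mathbb{E}[f(X(U_n))f(X(V_n))\mathbf{1}_{|W_n|<K}]=\sum_{k<K}\sum_{w\in\T_k}\sum_{a\ne b}\mathbb{P}(W_n=w,\,wa< U_n,\,wb< V_n)\;\mathbb{E}\!\left[R_{wa}(n-k-1,X(wa))\,R_{wb}(n-k-1,X(wb))\right],$$
where the sampling probabilities separate from the $X$-dependent factors because the sampling is independent of the traits.

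Finally I would let $n\to\infty$ with $K$ fixed. Each factor $R_{wa}(n-k-1,X(wa))$ is bounded by $\|f\|_\infty$ and, since $k<K$ is bounded, its relative generation $n-k-1\to\infty$, so Assumption (iii) and dominated convergence give $\mathbb{E}[R_{wa}(n-k-1,X(wa))R_{wb}(n-k-1,X(wb))]\to0$ for every fixed $(w,a,b)$; as the sampling probabilities are at most $1$ and the tree is locally finite (so only finitely many triples $(w,a,b)$ with $|w|<K$ occur), the shallow term is a finite sum of vanishing quantities and thus tends to $0$. Combining the two estimates yields $\limsup_{n}\mathbb{E}[S_n^2]\le\|f\|_\infty^2\limsup_{n}\mathbb{P}(|W_n|\ge K)$ for every $K$, and letting $K\to\infty$ in Assumption (ii) gives $\mathbb{E}[S_n^2]\to0$. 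The step I expect to be the main obstacle is precisely this factorization-and-identification: correctly invoking the branching property to decouple the two branches given the sibling traits, and recognizing the sampled descendant as uniform so that the $R$-functions appear; tied to it is the exchange of the limit with the sum over shallow ancestors, which genuinely relies on $\{w:|w|<K\}$ being finite, since the naive bound fails when the $n$-dependent sampling weights could concentrate on ever-deeper indices.
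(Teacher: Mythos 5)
Your proposal is correct and follows essentially the same route as the paper's proof: both compute the second moment, decompose it by the generation of the most recent common ancestor, bound the part with ancestor deeper than $K$ by $\|f\|_\infty^2\,\mathbb{P}(|U_n\wedge V_n|\geq K)$ via assumption (ii), and send the finitely many shallow terms to zero using assumption (iii), the branching factorization into a product of $R$-functions, and dominated convergence. The only difference is cosmetic: you phrase the second moment probabilistically as $\mathbb{E}[f(X(U_n))f(X(V_n))]$ for two independent uniform samples, whereas the paper writes out the corresponding double sum with weights $a_{n,r}(wi,wj)=N_{n-r-1}(wi)N_{n-r-1}(wj)/N_n^2$; these are identical.
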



\begin{proof} We first notice that
\begin{equation*}
\mathbb{E}\left(\frac{\sum_{u\in\T_n}f(X(u))}{N_n}\right)^2=\frac{1}{N_n^2}\mathbb{E}\left[\sum_{u\in\T_n}f^2(X(u))\right]+ \frac{1}{N_n^2}\mathbb{E}\bigg[\sum_{\substack{u,v\in\T_n\\u\neq v}}f(X(u))f(X(v))\bigg].
\end{equation*}
We need to prove that both terms in the right side  go to $0$ as $n\rightarrow\infty$. For the first term, since $f$ is bounded, there exists a constant $C$ such that $|f|\leq C$. 
By (i),
$$\frac{1}{N_n^2}\mathbb{E}\left[\sum_{u\in\T_n}f^2(X(u))\right]= \frac{\mathbb Ef^2(U_n)}{N_n}\leq \frac{C^2}{N_n}\rightarrow0\quad \text{as } n\rightarrow\infty.$$
The second term can be decomposed as
\begin{eqnarray*}
&&\frac{1}{N_n^2}\mathbb{E}\left[\sum_{\substack{u,v\in\T_n\\u\neq v}}f(X(u))f(X(v))\right]\\
&&\qquad = \frac{1}{N_n^2}\sum_{r=0}^{n-1}\mathbb{E}\left[\sum_{w\in\T_r}\sum_{\substack{wi,wj\in\T_1(w)\\i\neq j}}\sum_{\substack{wi\tilde{u}\in\T_{n-r-1}(wi)\\wj\tilde{v}\in\T_{n-r-1}(wj)}} f(X(wi\tilde{u}))f(X(wj\tilde{v}))\right]\\
&&\qquad = \sum_{r=0}^{K}\mathbb{E}A_{n,r}+\sum_{r=K+1}^{n-1}\mathbb{E}A_{n,r},
\end{eqnarray*}
where $K$ is a fixed integer suitable large, and
$$A_{n,r}=\frac{1}{N_n^2}\sum_{w\in\T_r}\sum_{\substack{wi,wj\in\T_1(w)\\i\neq j}}\sum_{\substack{wi\tilde{u}\in\T_{n-r-1}(wi)\\wj\tilde{v}\in\T_{n-r-1}(wj)}} f(X(wi\tilde{u}))f(X(wj\tilde{v})). $$
It is clear that
\begin{eqnarray*} 
\mathbb{E}A_{n,r}&=&\frac{1}{N_n^2}\sum_{w\in\T_r}\sum_{\substack{wi,wj\in\T_1(w)\\i\neq j}}\sum_{\substack{wi\tilde{u}\in\T_{n-r-1}(wi)\\wj\tilde{v}\in\T_{n-r-1}(wj)}} \mathbb{E}f(X(wi\tilde{u}))f(X(wj\tilde{v}))\\
&=&  \sum_{w\in\T_r}\sum_{\substack{wi,wj\in\T_1(w)\\i\neq j}}a_{n,r}(wi,wj)R_{n,r}(wi,wj),
\end{eqnarray*} 
where 
$$a_{n,r}(wi,wj)=\frac{N_{n-r-1}(wi)N_{n-r-1}(wj)}{N_n^2}$$
and
$$R_{n,r}(wi,wj)=\mathbb{E}\left[R_{wi}(n-r-1,X(wi)) R_{wj}(n-r-1,X(wj))\right].$$
As $|R_u(n,x)|\leq C$ for all $u$,$n$ and $x$,
\begin{eqnarray*}
\sum_{r=K+1}^{n-1}\mathbb{E}A_{n,r}&\leq&C^2\sum_{r=K+1}^{n-1}  \sum_{w\in\T_r}\sum_{\substack{wi,wj\in\T_1(w)\\i\neq j}} a_{n,r}(wi,wj) \\
&=&C^2\sum_{r=K+1}^{n-1}\mathbb{P}(|U_n\wedge V_n|=r)\\
&\leq& C^2 \mathbb{P}(|U_n\wedge V_n|\geq K+1).
\end{eqnarray*}
By (ii), $\limsup_{n\rightarrow\infty}\mathbb{P}(|U_n\wedge V_n|\geq K+1)\rightarrow 0$ as $K\rightarrow\infty$. Thus $\limsup_
{n\rightarrow\infty}\sum_{r=K+1}^{n-1}\mathbb{E}A_{n,r}$ is negligible for $K$ large enough.
For $0\leq r\leq K$, the fact that $R_u(n-r-1,x)$ goes to zero for a.e. $x$ and is bounded by $C$ with respect to $x$ enables us to apply the dominate convergence theorem and  get
$$ R_{n,r}(wi,wj)\rightarrow 0\quad\text{as $n\rightarrow\infty.$} $$ 
Adding that  $a_{n,r}(wi,wj)$ is bounded by $1$ yields
$$\sum_{r=0}^{K}\mathbb{E}A_{n,r}\rightarrow 0\quad\text{as $n\rightarrow\infty.$}$$ 
This completes the proof.
\end{proof}

We give here an extension of the previous result, to get asymptotic results on unbounded functions  (such as $f(x)=x^{\alpha})$.
\begin{lem} \label{LLN21E}
Let $f$ be a measurable function on $\mathcal X$. We assume that
\begin{itemize}
\item[(i)]   $\mathbb{E}f^2(U_n)/N_n \rightarrow 0$ as $n\rightarrow\infty$;
\item[(ii)]there exists a function $g$ such that  for all $n$, $u\in\T$ and  $x \in \mathcal X$, $|R_u(n,x)|\leq g(x)$;
\item[(iii)] $\mathbb{E}g(X(ui)g(X(uj)))\leq\beta_{|u|}$ for $ui,uj\in\T_1(u)$ and $i\neq j$, and $$\limsup_{K\rightarrow\infty}\limsup_{n\rightarrow\infty}C_{n,K}=0,$$ where $C_{n,K}=\sum_{r=K}^{n-1}\mathbb{P}(|U_n\wedge V_n|=r)\beta_r$;
\item[(iv)]for all $u\in\T$ and  $x \in \mathcal X$,  $\lim_{n\rightarrow\infty}R_u(n,x)=0$. 
\end{itemize}
Then
 $$\frac{\sum_{u\in\T_n}f(X(u))}{N_n}\rightarrow 0\qquad \text{in $L^2$.}$$
\end{lem}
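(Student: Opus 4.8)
The plan is to follow the proof of Lemma \ref{LLNP1} essentially line by line, replacing each use of the uniform bound $|f|\le C$ (and hence $|R_u|\le C$) by the pointwise domination supplied by the new hypotheses. First I would expand the second moment as
\[
\mathbb{E}\left(\frac{\sum_{u\in\T_n}f(X(u))}{N_n}\right)^2=\frac{1}{N_n}\mathbb{E}f^2(U_n)+\frac{1}{N_n^2}\mathbb{E}\left[\sum_{\substack{u,v\in\T_n\\u\neq v}}f(X(u))f(X(v))\right],
\]
where $U_n$ is uniform on $\T_n$. The diagonal term is exactly $\mathbb{E}f^2(U_n)/N_n$, which tends to $0$ by Assumption (i); this is the step that, unlike in Lemma \ref{LLNP1}, no longer follows from boundedness of $f$ but is now postulated directly.

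For the off-diagonal term I would decompose over the generation $r=|U_n\wedge V_n|$ of the most recent common ancestor exactly as before, obtaining $\sum_{r=0}^{n-1}\mathbb{E}A_{n,r}$ with
\[
\mathbb{E}A_{n,r}=\sum_{w\in\T_r}\sum_{\substack{wi,wj\in\T_1(w)\\i\neq j}}a_{n,r}(wi,wj)\,R_{n,r}(wi,wj),
\]
where $a_{n,r}(wi,wj)=N_{n-r-1}(wi)N_{n-r-1}(wj)/N_n^2$ and $R_{n,r}(wi,wj)=\mathbb{E}[R_{wi}(n-r-1,X(wi))R_{wj}(n-r-1,X(wj))]$, and I would split $\sum_{r=0}^{n-1}=\sum_{r=0}^{K-1}+\sum_{r=K}^{n-1}$ for a large fixed $K$. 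For the tail $r\ge K$, instead of bounding $R_{n,r}$ by $C^2$ I would invoke Assumption (ii), $|R_u(\cdot,x)|\le g(x)$, to get $|R_{n,r}(wi,wj)|\le \mathbb{E}[g(X(wi))g(X(wj))]\le\beta_r$ by Assumption (iii). Summing against $a_{n,r}$ and using the identity $\sum_{w\in\T_r}\sum_{i\neq j}a_{n,r}(wi,wj)=\mathbb{P}(|U_n\wedge V_n|=r)$ (valid since the tree is fixed) yields $\left|\sum_{r=K}^{n-1}\mathbb{E}A_{n,r}\right|\le C_{n,K}$, and the hypothesis $\limsup_K\limsup_n C_{n,K}=0$ makes this term negligible once $K$ is large.

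For the finitely many head terms $r<K$ I would argue as in Lemma \ref{LLNP1}: for each fixed $r$ and each pair $(wi,wj)$, Assumption (iv) gives $R_{wi}(n-r-1,x)\to 0$ pointwise as $n\to\infty$, while Assumptions (ii)–(iii) provide the integrable dominating function $g(X(wi))g(X(wj))$, so dominated convergence gives $R_{n,r}(wi,wj)\to 0$; since $a_{n,r}\le 1$ (because each $N_{n-r-1}(wi)\le N_n$) and the index set is finite for fixed $r$, each $\mathbb{E}A_{n,r}\to 0$ and hence so does their finite sum.

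The part requiring the most care — rather than a genuine obstacle, since this is a routine strengthening of Lemma \ref{LLNP1} — is the bookkeeping of the iterated limit: one fixes $K$, lets $n\to\infty$ so the head vanishes and the tail is controlled by $\limsup_n C_{n,K}$, and only afterwards lets $K\to\infty$ to annihilate the tail through Assumption (iii). I would also record the two elementary facts that make the domination work, namely $a_{n,r}\le 1$ and the finiteness of $\mathbb{E}[g(X(wi))g(X(wj))]$ needed for dominated convergence in the head, both immediate from the hypotheses.
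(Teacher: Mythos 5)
Your proposal is correct and follows essentially the same route as the paper: the paper's proof simply refers back to Lemma \ref{LLNP1} and replaces the tail bound $C^2\,\mathbb{P}(|U_n\wedge V_n|\geq K+1)$ by $|R_{n,r}(wi,wj)|\leq \mathbb{E}[g(X(wi))g(X(wj))]\leq\beta_r$, giving $\sum_{r\geq K}\mathbb{E}A_{n,r}\leq C_{n,K}$, with the diagonal term handled by (i) and the head terms by dominated convergence against $g\otimes g$ exactly as you describe. The order of limits ($n\to\infty$ then $K\to\infty$) is also the one used in the paper.
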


\begin{proof}According to the proof of Proposition \ref{LLNP1}, here we only need to show 
that $\limsup_n\sum_{r=K+1}^{n-1}\mathbb{E}A_{n,r}\rightarrow 0$  as $K\rightarrow \infty$. By (ii) and (iii), for $w\in \T_r$, $$|R_{n,r}(wi,wj)|\leq \mathbb{E}g(X(wi)g(X(wj)))\leq\beta_{r},$$
so that
 \begin{eqnarray*}
 \sum_{r=K+1}^{n-1}\mathbb{E}A_{n,r}\leq \sum_{r=K+1}^{n-1}  \sum_{w\in\T_r}\sum_{\substack{wi,wj\in\T_1(w)\\i\neq j}} a_{n,r}(wi,wj)\beta_r=C_{n,K}.
\end{eqnarray*}
Letting  successively  $n$ and $K$ go to $\infty$ yields the result.
\end{proof}

Proposition \ref{LLT11} is a result of Lemma \ref{LLNP11} below, with $f(x)=\mathbf{1}_{A}(x)-\mu(A)$,
which also can be extended to a result similar to Lemma \ref{LLN21E} for  unbounded functions $f$, but here we omit to state it for technical convenience.

\begin{lem}\label{LLNP11}
Let $f$ be a bounded measurable function on $\mathcal X$. We assume that
\begin{itemize}
\item[(i)]  $N_n\rightarrow\infty$ as $n\rightarrow\infty$;
\item[(ii)]$\limsup_{n\rightarrow\infty}\mathbb{P}(|U_n\wedge V_n|\geq n-K)\rightarrow 0$ as $K\rightarrow\infty$;
\item[(iii)]
$\lim_{n\rightarrow\infty}\sup_{u\in \T}|R_u(n,X(u))|=0.$
\end{itemize}
Then
 $$\frac{\sum_{u\in\T_n}f(X(u))}{N_n}\rightarrow 0\qquad \text{in $L^2$.}$$
\end{lem}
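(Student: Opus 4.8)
The plan is to mirror, almost verbatim, the second-moment computation carried out for Lemma \ref{LLNP1}, changing only the way the sum over the generation of the most recent common ancestor is cut. Expanding
$$\mathbb{E}\left(\frac{\sum_{u\in\T_n}f(X(u))}{N_n}\right)^2 = \frac{1}{N_n^2}\mathbb{E}\Big[\sum_{u\in\T_n}f^2(X(u))\Big] + \frac{1}{N_n^2}\mathbb{E}\Big[\sum_{\substack{u,v\in\T_n\\ u\neq v}}f(X(u))f(X(v))\Big],$$
the diagonal term is treated exactly as before: since $|f|\leq C$ it is bounded by $C^2/N_n$, which vanishes by (i). For the off-diagonal term I would reuse the decomposition over the generation $r=|U_n\wedge V_n|$ of the common ancestor, obtaining $\sum_{r=0}^{n-1}\mathbb{E}A_{n,r}$ with the same coefficients $a_{n,r}(wi,wj)=N_{n-r-1}(wi)N_{n-r-1}(wj)/N_n^2$ and correlations $R_{n,r}(wi,wj)=\mathbb{E}[R_{wi}(n-r-1,X(wi))R_{wj}(n-r-1,X(wj))]$, together with the identity $\sum_{w\in\T_r}\sum_{i\neq j}a_{n,r}(wi,wj)=\mathbb{P}(|U_n\wedge V_n|=r)$.

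The genuinely new ingredient is to exploit the uniform control in (iii). I would set $\delta_m:=\sup_{u\in\T,\,x\in\mathcal X}|R_u(m,x)|$; this deterministic sequence satisfies $\delta_m\leq C$ for all $m$ and $\delta_m\to 0$ by (iii) (note $\sup_{u}|R_u(m,X(u))|\leq\delta_m$ pointwise). Since $R_{wi}(n-r-1,\cdot)$ and $R_{wj}(n-r-1,\cdot)$ are each bounded by $\delta_{n-r-1}$, one gets $|R_{n,r}(wi,wj)|\leq\delta_{n-r-1}^2$, whence
$$\Big|\sum_{r=0}^{n-1}\mathbb{E}A_{n,r}\Big|\leq\sum_{r=0}^{n-1}\delta_{n-r-1}^2\,\mathbb{P}(|U_n\wedge V_n|=r).$$

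It then remains to split this sum at $r=n-K$, the mirror image of the cut used for Lemma \ref{LLNP1}. For the ancient common ancestors $r\leq n-K-1$ the elapsed time $n-r-1\geq K$ is large, so $\delta_{n-r-1}^2\leq\sup_{m\geq K}\delta_m^2$ and, bounding the probabilities by $1$, this block contributes at most $\sup_{m\geq K}\delta_m^2$, which tends to $0$ as $K\to\infty$. For the recent common ancestors $r\geq n-K$ I would use only the crude bound $\delta_{n-r-1}^2\leq C^2$, giving at most $C^2\,\mathbb{P}(|U_n\wedge V_n|\geq n-K)$; taking $\limsup_n$ and then $K\to\infty$ kills this block by (ii). Letting successively $n\to\infty$ and $K\to\infty$ thus forces $\sum_r\mathbb{E}A_{n,r}\to 0$, which finishes the proof.

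I expect the only delicate point to be conceptual rather than computational. Here assumption (ii) controls only \emph{recent} coalescences, so the contribution of the growing block of ancient ancestors $\{r\leq n-K-1\}$ must be absorbed all at once. Unlike Lemma \ref{LLNP1}, where the analogous block contained a \emph{fixed} number of generations and could be handled generation by generation via pointwise convergence and dominated convergence, here $n-r-1$ ranges over all values from $K$ to $n-1$, so per-generation convergence is of no use; it is precisely the uniformity in $u$ (and $x$) built into (iii) that supplies the single rate $\delta_m$ needed to dominate this block uniformly. This is exactly where the strengthening of (iii) relative to Lemma \ref{LLNP1} is indispensable, and it is what compensates for the weakening of (ii).
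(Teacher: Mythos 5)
Your overall strategy coincides with the paper's: the same diagonal/off-diagonal split, the same decomposition of the cross term over the generation $r$ of the most recent common ancestor, and the same cut at $r=n-K$, with the recent block killed by (ii) and the ancient block absorbed in one stroke by the uniform control in (iii). There is, however, one step that does not follow from hypothesis (iii) as stated. You set $\delta_m=\sup_{u\in\T,\,x\in\mathcal X}|R_u(m,x)|$ and assert that $\delta_m\to 0$ ``by (iii)''. But (iii) only asserts that $\sup_{u\in\T}|R_u(m,X(u))|\to 0$, where the supremum is evaluated at the \emph{realized} traits $X(u)$; this random quantity is dominated by $\delta_m$ (as you yourself note), so its convergence to $0$ gives no information about $\delta_m$ --- the implication runs in the wrong direction, and $\delta_m$ need not tend to $0$ (think of states $x$ that the chain never occupies). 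Your argument as written is valid under the stronger, $x$-uniform hypothesis of Proposition \ref{LLT11}, but not under the weaker (iii) of the lemma itself.

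The paper's proof avoids the deterministic supremum altogether: for $r\leq n-K-1$ it bounds
$$|R_{n,r}(wi,wj)|\leq\mathbb{E}\left|R_{wi}(n-r-1,X(wi))R_{wj}(n-r-1,X(wj))\right|\leq\sup_{k\geq K}\mathbb{E}\left(\sup_{u\in\T}|R_u(k,X(u))|^2\right),$$
sums the coefficients $a_{n,r}(wi,wj)$ over $w,i,j$ and $r\leq n-K-1$ to get the factor $\mathbb{P}(|U_n\wedge V_n|\leq n-K-1)\leq 1$, and then invokes (iii) together with dominated convergence (the supremum is bounded by $C$) to conclude that $\mathbb{E}\left(\sup_{u\in\T}|R_u(k,X(u))|^2\right)\to 0$ as $k\to\infty$, so the ancient block vanishes as $K\to\infty$. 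If you replace your $\delta_{n-r-1}^2$ by this expectation, the rest of your proof goes through verbatim, including your (correct) treatment of the recent block and your closing remark on why pointwise-in-generation convergence cannot be used here.
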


\begin{proof}Similar to the proof of Lemma \ref{LLNP1}, but we split 
$$\sum_{r=0}^{n-1}\mathbb{E}A_{n,r}=\sum_{r=0}^{n-K-1}\mathbb{E}A_{n,r}+\sum_{r=n-K}^{n-1}\mathbb{E}A_{n,r},$$
and show the negligibility of the two terms respectively. By (ii), we fist have
$$\sum_{r=n-K}^{n-1}\mathbb{E}A_{n,r}\leq C^2\limsup_{n\rightarrow\infty}\mathbb{P}(|U_n\wedge V_n|\geq n-K)\rightarrow0\;\;\text{as $K\rightarrow\infty$.}$$
Now we consider $\sum_{r=0}^{n-K-1}\mathbb{E}A_{n,r}$. For $r\leq n-K-1$ and $wi,wj \in\T_1(w)$ ($w\in\T_r$, $i\neq j$),
\begin{eqnarray*}
\left|R_{n,r}(wi,wj)\right|&\leq& \mathbb{E}\left|R_{wi}(n-r-1,X(wi)) R_{wj}(n-r-1,X(wj))\right|\\
&\leq&\sup_{k\geq K}\mathbb{E}\left(\sup_{u\in\T}\left|R_u(k,X(u))\right|^2\right).
\end{eqnarray*}
It follows that 
\begin{eqnarray*}
  \sum_{r=0}^{n-K-1}\mathbb{E}A_{n,r}&\leq&    \sum_{r=0}^{n-K-1} \sum_{w\in\T_r}\sum_{\substack{wi,wj\in\T_1(w)\\i\neq j}} a_{n,r}(wi,wj)\sup_{k\geq K}\mathbb{E}\left(\sup_{u\in\T}\left|R_u(k,X(u))\right|^2\right)\\
  &=&  \mathbb{P}(|U_n\wedge V_n|\leq n-K-1)\sup_{k\geq K}\mathbb{E}\left(\sup_{u\in\T}\left|R_u(k,X(u))\right|^2\right)\\
  &\leq&\sup_{k\geq K}\mathbb{E}\left(\sup_{u\in\T}\left|R_u(k,X(u))\right|^2\right).
\end{eqnarray*}
By (iii) and the dominate convergence theorem, we have
$$\lim_{n\rightarrow\infty}\mathbb{E}\left(\sup_{u\in\T}\left|R_u(n,X(u))\right|^2\right)=0.$$
Thus $ \sup_n\sum_{r=0}^{n-K-1}\mathbb{E}A_{n,r}\rightarrow 0$ as $K\rightarrow \infty$. 
\end{proof}

\section{Proofs for branching Markov chains in random environments}
In this section, we focus on the process in random environments and present proofs for the theorems stated in Section \ref{LLNBP}. 

\subsection{Many to one formula}

\begin{lem}\label{LLL3.2}
For each $x\in\R$, $u\in\mathbb T$ and any measurable function $f_{\xi,n}$ on $\mathcal{X}$,
\begin{equation}\label{LLNEMTO}
\mathbb{E}_{\xi}\left[f_{\xi,n}(Y_n(u)) \ \vert \ Y_0(u)=x\right]=\frac{\mathbb{E}_\xi\left[\sum_{uv\in\T_n(u)}f_{\xi,n}(X(uv))\bigg|X(u)=x\right]}{\mathbb{E}_\xi N(u)}.
\end{equation}
\end{lem}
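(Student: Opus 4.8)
The plan is to prove the identity by induction on $n$, peeling off the first generation below $u$ and combining the branching property with the very definition of the auxiliary kernel $Q$. Write $r=|u|$. To keep every expectation meaningful I would first treat the case $f_{\xi,n}\geq 0$, so that all sums and integrals are unambiguous by Tonelli, and recover the general (integrable) case by writing $f_{\xi,n}=f_{\xi,n}^+-f_{\xi,n}^-$. I would also record the normalization: the denominator is the quenched mean number of generation-$n$ descendants of $u$, which telescopes as $\mathbb{E}_\xi N_n(u)=m_r m_{r+1}\cdots m_{r+n-1}$ (for $n=1$ this reduces to exactly $\mathbb{E}_\xi N(u)$).

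First I would establish the one-step formula, which serves as the base case. Conditioning on $N(u)=k$ and using that $P_{\xi_n}^{(k,i)}(x,\cdot)$ is the law of the trait of the $i$-th child given $k$ children and parent trait $x$, the definition of $Q_n$ gives, for any $u$ with $|u|=n$ and any nonnegative measurable $g$,
$$\mathbb{E}_\xi\Big[\sum_{i=1}^{N(u)} g(X(ui))\,\Big|\,X(u)=x\Big]=m_n\int g\,dQ_n(x,\cdot)=\mathbb{E}_\xi N(u)\,\mathbb{E}_\xi\big[g(Y_1(u))\,\big|\,Y_0(u)=x\big].$$

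For the inductive step, proving the identity for all $u$ simultaneously by induction on $n$, I set $\Phi_n(u,x)=\mathbb{E}_\xi[\sum_{uv\in\T_n(u)}f_{\xi,n}(X(uv))\mid X(u)=x]$ and split the generation-$n$ descendants of $u$ according to the first-generation child they issue from. Conditioning on the first generation and using the Markov and branching properties built into the construction $(\ref{constr})$—so that, given $N(u)$ and the traits $(X(ui))_i$, the subtrees below the children are independent and the one below $ui$ depends on the past only through $X(ui)$—I obtain the recursion $\Phi_n(u,x)=\mathbb{E}_\xi[\sum_{i=1}^{N(u)}\Phi_{n-1}(ui,X(ui))\mid X(u)=x]$. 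The induction hypothesis applied at each child $ui$ (which sits in generation $r+1$) gives $\Phi_{n-1}(ui,y)=\mathbb{E}_\xi N_{n-1}(ui)\,h(y)$ with $h(y)=\mathbb{E}_\xi[f_{\xi,n}(Y_{n-1}(ui))\mid Y_0(ui)=y]$; since $\mathbb{E}_\xi N_{n-1}(ui)=m_{r+1}\cdots m_{r+n-1}$ and $h$ do not depend on $i$, I pull the constant out and apply the one-step formula to $h$, turning $\mathbb{E}_\xi[\sum_i h(X(ui))\mid X(u)=x]$ into $m_r\int h\,dQ_r(x,\cdot)$. Finally Chapman--Kolmogorov gives $\int h\,dQ_r(x,\cdot)=\mathbb{E}_\xi[f_{\xi,n}(Y_n(u))\mid Y_0(u)=x]$ and $m_r m_{r+1}\cdots m_{r+n-1}=\mathbb{E}_\xi N_n(u)$, which closes the induction.

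The main obstacle is twofold, and both parts are bookkeeping rather than analysis. First, one must justify the recursion cleanly, conditioning on the first generation and invoking the branching property together with the Markov structure of $(\ref{constr})$ to replace the subtree-sum below $ui$ by $\Phi_{n-1}(ui,X(ui))$. Second—and this is the genuinely delicate point in the time-inhomogeneous setting—one must track the environment index so that a child in generation $r+1$ advances the transitions $Q_{r+1},Q_{r+2},\dots$, and verify that composing $Q_r$ with $Q_{r+1}\cdots Q_{r+n-1}$ reproduces exactly the $n$-step law of the auxiliary chain $Y(u)$ started at $x$, with no off-by-one error. Once the convention that the $j$-th step of $Y(u)$ uses $Q_{r+j-1}$ is fixed, the remaining manipulations are just the branching property and Tonelli.
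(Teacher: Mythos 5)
Your proposal is correct and follows essentially the same route as the paper: both rest on the one-step identity $\mathbb{E}_\xi[\sum_{i=1}^{N(u)}g(X(ui))\mid X(u)=x]=m_{|u|}Q_{|u|}g(x)$ and then iterate it via the branching property until the composition $Q_{|u|}\cdots Q_{|u|+n-1}$ appears; the paper peels off the last generation by conditioning on $\mathcal{F}_{n-1}$ and writes the iteration as an explicit multi-sum over offspring numbers and indices, while you peel off the first generation and phrase it as a formal induction closed by Chapman--Kolmogorov, which is only a cosmetic difference. Your observation that the denominator should be read as $\mathbb{E}_\xi N_n(u)=m_{|u|}\cdots m_{|u|+n-1}$ matches the normalization actually used at the end of the paper's proof.
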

\begin{proof}
By the definition of $\{Y_n(u)\}$,
it is easy to see that
\begin{eqnarray*}
&&\mathbb{E}_{\xi}\left[f_{\xi,n}(Y_n(u))|Y_0(u)=x\right]\\
 && \qquad = \Q_{|u|}\cdots \Q_{|u|+n-1}f_{\xi,n}(x)\\
&& \qquad  = (m_{|u|}\cdots m_{|u|+n-1})^{-1}\times \\ &&\qquad \qquad 
\sum_{k_0,\cdots,k_{n-1}=0}^\infty 
\sum_{i_0=1}^{k_0}\cdots\sum_{i_{n-1}=1}^{k_{n-1}} 
p_{k_0}(\xi_{|u|})\cdots p_{k_{n-1}}(\xi_{|u|+n-1})P_{\xi_0}^{(k_0,i_0)}\cdots P_{\xi_{n-1}}^{(k_{n-1},i_{n-1})}f_{\xi,n}(x).
\end{eqnarray*}
On the other hand, we notice that
\begin{eqnarray}
&&\mathbb{E}_{\xi,x}\left[\sum_{ui\in\T_1(u)}f_{\xi,n}(X(ui))\right]\\
&& \quad =
\mathbb{E}_\xi\bigg[\sum_{ui\in\T_1(u)}f_{\xi,n}(X(ui))\bigg|X(u)=x\bigg]\nonumber\\
&& \quad =\sum_{k=0}^{\infty}\mathbb{E}_\xi\bigg[\sum_{i=1}^kf_{\xi,n}(X(ui))\bigg|N(u)=k,X(u)=x\bigg]\mathbb{P}_\xi(N(u)=k)\nonumber\\
&& \quad =\sum_{k=0}^{\infty}p_k(\xi_{|u|})\sum_{i=1}^kP_{\xi_{|u|}}^{(k,i)}f_{\xi,n}(x).
\end{eqnarray}
Thus,

\begin{eqnarray*}
&&\mathbb{E}_{\xi,x}\left[\sum_{uv\in\T_n}f_{\xi,n}(X(uv))  \right]\\
&& \quad =\mathbb{E}_{\xi,x} \left[\sum_{uv\in\T_{n-1}(u)}\sum_{uvi\in\T_1(uv)}f_{\xi,n}(X(uvi)) \right] \\
&& \quad = \mathbb{E}_{\xi,x}  \left[\sum_{uv\in\T_{n-1}(u)}\mathbb{E}_\xi\left[ \sum_{uvi\in\T_1(uv)}f_{\xi,n}(X(uvi))\bigg|\mathcal{F}_{n-1}, X(uv)\right]\right] \\
&& \quad =\mathbb{E}_{\xi,x}  \left[\sum_{uv\in\T_{n-1}(u)}\mathbb{E}_{\xi,X(uv)}\left[\sum_{uvi\in\T_1(uv)}f_{\xi,n}(X(uvi))\right] \right] \\
&& \quad = \sum_{k=0}^\infty\sum_{i=1}^{k}p_{k}(\xi_{|u|+n-1})\mathbb{E}_{\xi,x}  \left[\sum_{uv\in\T_{n-1}(u)}P_{\xi_{|u|+n-1}}^{(k,i)}f_{\xi,n} (X(uv))\right].
\end{eqnarray*}
By iteration, we obtain
\begin{eqnarray*} \mathbb{E}_{\xi,x}\left[\sum_{uv\in\T_n}f_{\xi,n}(X(uv))\right]&=&\sum_{k_0,\cdots,k_{n-1}=0}^\infty\sum_{i_0=1}^{k_0}\cdots\sum_{i_{n-1}=1}^{k_{n-1}}p_{k_0}(\xi_{|u|})\cdots p_{k_{n-1}}(\xi_{|u|+n-1})\\
 &&\qquad \quad \times \  P_{\xi_{|u|}}^{(k_0,i_0)}\cdots \p_{\xi_{|u|+n-1}}^{(k_{|u|+n-1},i_{n-1})}f_{\xi,n}(x),
\end{eqnarray*}
so that
$$\frac{\mathbb{E}_{\xi,x}\left[ \sum_{uv\in\T_n(u)}f_{\xi,n}(X(uv))\right]}{\mathbb{E}_\xi N_n(u)}=\mathbb{E}_{\xi,x}\left[f_{\xi,n}(Y_n(u))\right].$$
It ends up the proof.
\end{proof}

\subsection{Proof of the law of large numbers in generation $n$}
Following the definition of  $Y$  in Section \ref{LLNBPF},   for each $u\in\T$,  we define  the Markov chain $Y(u)$ associated to $u$  by
$$\mathbb{P}_\xi(Y_{j+1}(u)=y|Y_j(u)=x)=Q(T^{|u|+j}\xi;x,y).$$
In particular, $Y_n=Y_n(\emptyset)$. Moreover, for any measurable function $f$ on $\mathcal {X}$, we denote throughout 
$$P_{\xi_n}^{(k,i)}f(x):=\int f(y)P_{\xi_n}^{(k,i)}(x, dy)\qquad \text{and}\qquad Q_jf(x):=\int f(y)Q_j(x, dy).$$

First, Lemma \ref{LLL3.2} below reveals the mean  relation between $Y_n(u)$ and the tree $\T(u)$, in the same vein as \cite{guy,delmar,harrisroberts}.

\label{proofLLN1}
\begin{pr}\label{LLP3.3}
Let $\nu_\xi$ be the distribution of $X(\emptyset)$. We assume that  for almost all $\xi$, there exist  a function $g$, an integer $n_0=n_0(\xi)$  and non negative numbers $(\alpha_n,\beta_n)$$=(\alpha_n(\xi),\beta_n(\xi))$ such that
\begin{itemize}
\item[(H1)] for all $n\geq n_0$ and  $x\in\mathcal{X}$, 
$$\sup_{0\leq r<n}Q_r\cdots Q_{n-1}|f_{\xi,n}|(x)\leq g(x).$$ 
\item[(H2)] for every  $n\geq n_0$,
$$\nu_\xi Q_0\cdots Q_{n-1}f_{\xi,n}^2\leq \alpha_n, \qquad \alpha_n/P_n\rightarrow0  \ \ (n\rightarrow\infty);$$
\item[(H3)] for every $n\in\mathbb N$, 
$$\nu_\xi Q_0\cdots Q_{n-1}J_{T^n\xi}(g\otimes g)\leq \beta_n, \qquad
\sum_n\frac{\beta_n}{{P_n}m_n^2}<\infty,$$
 where $J_\xi(f_\xi\otimes g_\xi)(x):=\mathbb{E}_{\xi,x}\sum_{\substack{i,j\in \T_1\\i\neq j}}f_\xi(X(i))g_\xi(X(j))$;
\item[(H4)] for each  $r$ fixed, $Q_r\cdots Q_{n-1}f_{\xi,n}(x)\rightarrow0$ as $n\rightarrow\infty$ for every $x\in\mathcal {X}$.
\end{itemize}
Then we have for almost all $\xi$,
$$\frac{\sum_{u\in \T_n} f_{\xi,n}(X(u))}{P_n}\rightarrow0\qquad \text{in $\mathbb{P}_\xi$-$L^2$.}$$
\end{pr}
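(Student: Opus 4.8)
The plan is to follow the second-moment strategy already used for Lemmas \ref{LLNP1} and \ref{LLN21E}, but now averaging over the \emph{random} tree by means of the many-to-one formula of Lemma \ref{LLL3.2}. Writing $S_n=\sum_{u\in\T_n}f_{\xi,n}(X(u))$, I would expand
$$\mathbb{E}_\xi\left[\left(\frac{S_n}{P_n}\right)^2\right]=\frac{1}{P_n^2}\mathbb{E}_\xi\Big[\sum_{u\in\T_n}f_{\xi,n}^2(X(u))\Big]+\frac{1}{P_n^2}\mathbb{E}_\xi\Big[\sum_{\substack{u,v\in\T_n\\u\neq v}}f_{\xi,n}(X(u))f_{\xi,n}(X(v))\Big]$$
and show that both terms vanish as $n\to\infty$. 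The diagonal term is immediate: applying Lemma \ref{LLL3.2} with the function $f_{\xi,n}^2$ gives $\mathbb{E}_\xi\sum_{u\in\T_n}f_{\xi,n}^2(X(u))=P_n\,\nu_\xi Q_0\cdots Q_{n-1}f_{\xi,n}^2\le P_n\alpha_n$, so this contribution is at most $\alpha_n/P_n\to0$ by (H2).

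The off-diagonal term is the crux. I would decompose it according to the generation $r=|u\wedge v|$ of the most recent common ancestor $w$, exactly as in the proof of Lemma \ref{LLNP1}: for each $w\in\T_r$ and each ordered pair of distinct children $wi,wj$, the two subtrees rooted at $wi$ and $wj$ are conditionally independent given the traits $X(wi),X(wj)$ and the genealogy. Conditioning at generation $r+1$ and applying Lemma \ref{LLL3.2} to each subtree (which starts at generation $r+1$ and runs $n-r-1$ steps, so its expected size is $P_n/P_{r+1}$ and its lineage propagator is $Q_{r+1}\cdots Q_{n-1}$) turns the inner expectation into $(P_n/P_{r+1})^2 F_{r+1}(X(wi))F_{r+1}(X(wj))$, where $F_{r+1}:=Q_{r+1}\cdots Q_{n-1}f_{\xi,n}$. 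Summing over the children pairs recognizes $J_{T^r\xi}(F_{r+1}\otimes F_{r+1})$, and a further application of Lemma \ref{LLL3.2} over $\T_r$ yields $P_r\,\nu_\xi Q_0\cdots Q_{r-1}J_{T^r\xi}(F_{r+1}\otimes F_{r+1})$. Using $|F_{r+1}|\le g$ from (H1) together with $P_{r+1}=P_rm_r$, the contribution of generation $r$ to $\mathbb{E}_\xi(S_n/P_n)^2$ is bounded in absolute value by $\beta_r/(P_rm_r^2)$ by (H3).

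To conclude I would split $\sum_{r=0}^{n-1}$ at a fixed level $K$. The tail $\sum_{r>K}\beta_r/(P_rm_r^2)$ is made arbitrarily small by the summability in (H3), uniformly in $n$. For the finitely many head terms $0\le r\le K$, I would let $n\to\infty$ with $r$ fixed: by (H4), $F_{r+1}(x)=Q_{r+1}\cdots Q_{n-1}f_{\xi,n}(x)\to0$ pointwise, and since $|F_{r+1}|\le g$ with $\nu_\xi Q_0\cdots Q_{r-1}J_{T^r\xi}(g\otimes g)\le\beta_r<\infty$, two successive applications of dominated convergence give $\nu_\xi Q_0\cdots Q_{r-1}J_{T^r\xi}(F_{r+1}\otimes F_{r+1})\to0$, so each head term vanishes. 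Combining the two estimates shows the off-diagonal term tends to $0$, which together with the diagonal estimate yields the claimed $\mathbb{P}_\xi$-$L^2$ convergence.

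The main obstacle, and the step deserving the most care, is the off-diagonal reduction: since the many-to-one formula only delivers expectations already averaged over the random tree, one cannot condition on the whole genealogy and must instead condition precisely at generation $r+1$, so that the branching property decouples the two subtrees \emph{before} each is collapsed to a single auxiliary lineage. Keeping track of the normalizations $P_n/P_{r+1}$ and the identity $P_{r+1}=P_rm_r$ is what produces exactly the summand $\beta_r/(P_rm_r^2)$ appearing in (H3); getting these bookkeeping factors right, and justifying the two dominated-convergence passages through the bilinear functional $J$, is the delicate part.
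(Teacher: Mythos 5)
Your proposal is correct and follows essentially the same route as the paper's proof: the same diagonal/off-diagonal second-moment expansion, the decomposition by the generation of the most recent common ancestor, conditioning at generation $r+1$ so the branching property decouples the two subtrees before each is collapsed via the many-to-one formula, the resulting bound $\beta_r/(P_rm_r^2)$ for the tail $r>K$, and dominated convergence for the finitely many head terms. The bookkeeping you describe ($P_n/P_{r+1}=m_{r+1}\cdots m_{n-1}$ and $P_{r+1}=P_rm_r$) matches the paper's computation of $\mathbb{E}_\xi A_{n,r}$ exactly.
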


\begin{proof}
We need to show that
\begin{equation*}\label{LLE3.5}
\mathbb{E}_\xi\left(\frac{\sum_{u\in\T_n}f_{\xi,n}(X(u))}{P_n}\right)^2\rightarrow0\qquad a.s.\quad\text{as $n\rightarrow\infty$.}
\end{equation*}
Similar to the proof of Proposition \ref{LLNP1}, we write
\begin{eqnarray}\label{LLNEsp}
\mathbb{E}_\xi\left(\frac{\sum_{u\in\T_n}f_{\xi,n}(X(u))}{P_n}\right)^2
&=&\frac{1}{P_n^2}\mathbb{E}_\xi\left[\sum_{u\in\T_n}f_{\xi,n}^2(X(u))\right]\nonumber\\
&&+ \sum_{r=0}^K\mathbb{E}_\xi  A_{n,r}+\sum_{r=K+1}^{n-1}\mathbb{E}_\xi  A_{n,r},
\end{eqnarray}
where
$$ A_{n,r}:=\frac{1}{P_n^2}\sum_{w\in\T_r}\sum_{\substack{wi,wj\in\T_1(w)\\i\neq j}}\sum_{wi\tilde{u}\in\T_{n-r-1}(wi)}\sum_{wj\tilde{v}\in\T_{n-r-1}(wj)}f_{\xi,n}(X(wi\tilde{u}))f_{\xi,n}(X(wj\tilde{v})),$$
and $K=K(\xi)(\geq n_0)$ is  suitable large.
By Lemma \ref{LLL3.2} and condition (H2), for $n\geq n_0$,
\begin{eqnarray*}
\frac{1}{P_n^2}\mathbb{E}_\xi\left[\sum_{u\in\T_n}f_{\xi,n}^2(X(u))\right]&=&\frac{1}{P_n}\mathbb{E}_\xi f_{\xi,n}^2(Y_n)\\
&=&\frac{1}{P_n}\nu_\xi Q_0\cdots Q_{n-1}f_{\xi,n}^2\\
&\leq &\frac{\alpha_n}{P_n}\rightarrow0\;\; a.s.\quad\text{as $n\rightarrow\infty$.}
\end{eqnarray*}
Again by Lemma \ref{LLL3.2},  for every $w \in \T_r$, $i\ne j \in \mathbb N$, denoting $n_r:=n-r-1$ in the computation below,
\begin{eqnarray*}
&&B(wi,wj)\\ && \qquad :=\mathbb{E}_\xi\bigg[\sum_{wi\tilde{u}\in\T_{n_r}(wi)}f_{\xi,n}(X(wi\tilde{u}))\sum_{wj\tilde{v}\in\T_{n_r}(wj)}f_{\xi,n}(X(wj\tilde{v}))\bigg|\mathcal{F}_{r+1},X(wi),X(wj)\bigg] \\
&&\qquad =\mathbb{E}_{\xi,X(wi)}\bigg[\sum_{wi\tilde{u}\in\T_{n_r}(wi)}f_{\xi,n}(X(wi\tilde{u}))\bigg]
\mathbb{E}_{\xi,X(wj)}\bigg[\sum_{wj\tilde{v}\in\T_{n_r}(wj)}f_{\xi,n}(X(wj\tilde{v}))\bigg] \\
&&\qquad =\mathbb{E}_{\xi,X(wi)}\left[f_{\xi,n}(Y_{n_r}(wi))\right]\mathbb{E}_{\xi}\left[N_{n_r}(wi)\right]\mathbb{E}_{\xi,X(wj)}\left[f_{\xi,n}(Y_{n_r}(wj))\right]\mathbb{E}_{\xi}\left[N_{n_r}(wj)\right]\\
&&\qquad =m_{r+1}^2\cdots m_{n-1}^2Q_{r+1,n} f_{\xi,n}(X(wi))Q_{r+1,n} f_{\xi,n}(X(wj)),
\end{eqnarray*}
with the notation $Q_{r,n}=Q_{r}\cdots Q_{n-1}$.
Thus, 
\begin{eqnarray*}
&&\mathbb{E}_\xi  A_{n,r} \\
&& = \frac{1}{P_n^2}\mathbb{E}_\xi\left[\sum_{w\in\T_r}\sum_{\substack{wi,wj\in\T_1(w)\\i\neq j}} B(wi,wj)\right] \\
&& =\frac{1}{P_{r+1}^2}\mathbb{E}_\xi\left[\sum_{w\in\T_r}\sum_{\substack{wi,wj\in\T_1(w)\\i\neq j}} Q_{r+1,n} f_{\xi,n}(X(wi)) Q_{r+1,n} f_{\xi,n})(X(wj))\right]\\
&& = \frac{1}{P_{r}m_r^2}\nu_\xi Q_0\cdots Q_{r-1}J_{T^r\xi}(Q_{r+1,n} f_{\xi,n}\otimes Q_{r+1,n}f_{\xi,n}).
\end{eqnarray*}
For $r\geq K+1$, by conditions (H1) and (H3), 
\begin{eqnarray*}
\sum_{r=K+1}^{n-1}\mathbb{E}_\xi  A_{n,r}&\leq& \sum_{r=K+1}^\infty\frac{1}{P_{r}m_r^2}\nu_\xi Q_0\cdots Q_{r-1}J_{T^r\xi}(g\otimes g)\\&\leq&\sum_{r=K+1}^\infty\frac{\beta_r}{P_{r}m_r^2}\rightarrow0\;\; a.s. \quad\text{as $K\rightarrow\infty$. }
\end{eqnarray*}
It remains to consider $0\leq r\leq K$. For almost all $\xi$, for each $r$ fixed, by (H4), 
$$Q_{r+1,n}f_{\xi,n}\otimes Q_{r+1,n} f_{\xi,n}(y,z)\stackrel{n\rightarrow\infty}{\longrightarrow}0 \;\;\text{for each $(y,z)\in\mathcal X^2$.}$$
Notice that by (H1) and (H3), for $n\geq n_0(\xi)$,
\begin{eqnarray*}
&&\nu_\xi Q_0\cdots Q_{r-1}J_{T^r\xi}(Q_{r+1,n} f_{\xi,n}\otimes Q_{r+1,n} f_{\xi,n})\\
&& \qquad \qquad \qquad \qquad \qquad \qquad \leq\nu_\xi Q_0\cdots Q_{r-1}J_{T^r\xi}(g\otimes g)\leq \beta_r\qquad a.s.
\end{eqnarray*}
By the dominated convergence theorem, $\sum_{r=0}^{K}\mathbb{E}_\xi  A_{n,r}\rightarrow0$ a.s. as $n\rightarrow\infty$. The proof is completed.
\end{proof}

In particular, applying Proposition \ref{LLP3.3} with $f_{\xi,n}=f_\xi-\pi(f_\xi)$, we obtain the following result. We denote 
$$F_{\xi,n}=\nu_\xi Q_0\cdots Q_{n-1}.$$

\begin{pr}\label{LLP3.4}
Let $\nu_\xi$ be the distribution of $X(\emptyset)$.  We assume that  for almost all $\xi$, there exist  a function $g$, an integer $n_0=n_0(\xi)$  and non negative numbers $(\alpha_n,\beta_n)$$=(\alpha_n(\xi),\beta_n(\xi))$ such that
\begin{itemize}
\item[(H1)] for all $n\geq n_0$ and   $x\in\mathcal{X}$,  
$$\sup_{0\leq r<n}Q_r\cdots Q_{n-1}|f_\xi|(x)\leq g(x);$$ 
\item[(H2)] for every $n\geq n_0$, $ F_{\xi,n}(f_\xi^2)\leq \alpha_n$, and
$ \alpha_n/P_n\rightarrow0 $ $(n\rightarrow\infty)$.
\item[(H3)] for every $n\in\mathbb N$, 
$$ \max\{F_{\xi,n}(J_{T^{n}\xi}(g\otimes g)),F_{\xi,n}(J_{T^{n}\xi}(g\otimes
\mathbf{1})),F_{\xi,n}(J_{T^{n}\xi}(\mathbf{1}\otimes \mathbf{1}))\}\leq
\beta_n,\quad\sum_n\frac{\beta_n}{{P_n}m_n^2}<\infty;$$
\item[(H4)]there exists $\pi (f_\xi)\in\R$ bounded by some constant $M$ such that for each $r$ fixed, 
$$\lim_{n\rightarrow\infty}Q_r\cdots Q_{n-1}f_\xi(x)=\pi (f_\xi)\quad\text{ for every $x\in\mathcal{X}$.}$$
\end{itemize}
Under (H1)-(H4), if additionally $W_n\rightarrow W$ in $\mathbb{P}_\xi$-$L^2$, then  we have for almost all $\xi$, 
$$\frac{\sum_{u\in \T_n} f_\xi(X(u))}{P_n}\rightarrow \pi (f_\xi)W\qquad \text{in $\mathbb{P}_\xi$-$L^2$.}$$
\end{pr}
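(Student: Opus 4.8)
The plan is to reduce the statement to Proposition \ref{LLP3.3} by centering $f_\xi$ around its limit $\pi(f_\xi)$. First I would use the trivial decomposition
$$\frac{\sum_{u\in\T_n}f_\xi(X(u))}{P_n}=\frac{\sum_{u\in\T_n}\big(f_\xi(X(u))-\pi(f_\xi)\big)}{P_n}+\pi(f_\xi)\,\frac{N_n}{P_n}.$$
The last term is exactly $\pi(f_\xi)W_n$, which converges to $\pi(f_\xi)W$ in $\mathbb{P}_\xi$-$L^2$ by the extra hypothesis $W_n\to W$. It therefore suffices to show the centered sum tends to $0$ in $\mathbb{P}_\xi$-$L^2$, and for this I would apply Proposition \ref{LLP3.3} to the (time-constant) sequence $f_{\xi,n}:=f_\xi-\pi(f_\xi)$.

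Next I would verify that hypotheses (H1)--(H4) of Proposition \ref{LLP3.3} hold for $f_{\xi,n}=f_\xi-\pi(f_\xi)$, using throughout that each $Q_j$ is a Markov kernel, so $Q_r\cdots Q_{n-1}\mathbf{1}=\mathbf{1}$ and hence $Q_r\cdots Q_{n-1}f_{\xi,n}=Q_r\cdots Q_{n-1}f_\xi-\pi(f_\xi)$. For (H1), since $|f_{\xi,n}|\le|f_\xi|+M$ the bound reads $Q_r\cdots Q_{n-1}|f_{\xi,n}|\le g+M=:g'$. For (H4), the displayed identity together with the present (H4) gives $Q_r\cdots Q_{n-1}f_{\xi,n}(x)\to\pi(f_\xi)-\pi(f_\xi)=0$ for each fixed $r$. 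For (H2), the elementary inequality $(a-b)^2\le 2a^2+2b^2$ yields $F_{\xi,n}(f_{\xi,n}^2)\le 2\alpha_n+2M^2=:\alpha_n'$; and for (H3), expanding $g'\otimes g'=(g+M)\otimes(g+M)$ by bilinearity of $J$ and invoking the symmetry $J_{T^n\xi}(\mathbf{1}\otimes g)=J_{T^n\xi}(g\otimes\mathbf{1})$ (the inner sum runs over $i\ne j$) bounds $F_{\xi,n}(J_{T^n\xi}(g'\otimes g'))$ by $(1+M)^2\beta_n=:\beta_n'$.

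The point to get right will be that these inflated constants $\alpha_n',\beta_n'$ still satisfy the normalisations demanded by Proposition \ref{LLP3.3}. The summability $\sum_n\beta_n'/(P_nm_n^2)=(1+M)^2\sum_n\beta_n/(P_nm_n^2)<\infty$ is immediate from (H3). The condition $\alpha_n'/P_n\to 0$ is where the supercritical assumption $\mathbb{E}(\log m_0)>0$ enters: the ergodic theorem applied to $(\log m_n)$ gives $n^{-1}\log P_n\to\mathbb{E}(\log m_0)>0$ a.s., so $P_n\to\infty$, whence the extra contribution $2M^2/P_n$ vanishes and $\alpha_n'/P_n\le 2\alpha_n/P_n+2M^2/P_n\to 0$. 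With (H1)--(H4) checked for $f_{\xi,n}$, Proposition \ref{LLP3.3} makes the centered sum negligible in $\mathbb{P}_\xi$-$L^2$, and adding back the convergence of $\pi(f_\xi)W_n$ gives the limit $\pi(f_\xi)W$, completing the argument.
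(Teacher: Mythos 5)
Your proposal is correct and takes exactly the route the paper intends: the paper gives no separate proof of this proposition, stating only that it follows by ``applying Proposition \ref{LLP3.3} with $f_{\xi,n}=f_\xi-\pi(f_\xi)$,'' which is precisely your decomposition into the centered sum plus $\pi(f_\xi)W_n$. Your verification of (H1)--(H4) for the centered function (using $g+M$, the inequality $(a-b)^2\le 2a^2+2b^2$, the bilinearity and symmetry of $J$, and $P_n\to\infty$ from supercriticality) supplies the details the paper leaves implicit, and it is accurate.
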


In particular, if $f_\xi=f$, then conditions (H1) and (H4) can be simplified to
\begin{itemize}
\item[(H1')]$ Q_0\cdots Q_{n-1}|f|(x)\leq g(x)$ for all $n\in\mathbb{N}$ and  $x\in\mathcal{X}$;
\item[(H4')]there exists $\pi (f)\in\R$ such that
$Q_0\cdots Q_{n-1}f(x)\rightarrow \pi (f)$
 as $n\rightarrow\infty$ for every $x\in\mathcal{X}$.
 \end{itemize}

\medskip

\begin{proof}[Proof of Theorem \ref{LLNT3.1'}]We apply Proposition \ref{LLP3.3} with $f_{\xi,n}(x)=\mathbf{1}_{A}(x)-\mu_{\xi,n}(A)$. Indeed,  for $n$ large enough, $f_{\xi,n}$ is bounded by some constant $M$. We can let  $g=M$, $\alpha_n=M^2$ and $\beta_n=\mathbb{E}_{T^n\xi}N^2$. The assumptions
(\ref{assBPRE})  ensure that a.s.,
$$\frac{\alpha_n}{P_n}\rightarrow0,\quad
\sum_n \frac{\beta_n}{P_n m_n^2}<\infty\quad \text{and}\quad
W_n\rightarrow W \;\;\text{ in $\mathbb{P}_\xi$-$L^2$.}$$
Then Proposition \ref{LLP3.3} yields  (\ref{LLE3.2b}). Moreover  $\mathbb E_\xi W=1$ implies  that $q(\xi)=\p_\xi(W=0)<1$, so  $\{W>0\}=\{N_n\rightarrow\infty\}$  $\p_\xi$-a.s.. 
Finally, (\ref{LLE3.3b}) comes  from  (\ref{LLE3.2b}) and the fact that $W_n\rightarrow W>0$ a.s. on the non-extinction event $\{N_n\rightarrow\infty\}$.
\end{proof}

\begin{proof}[Proof of Corollary \ref{LLT3.1}]
We apply Proposition \ref{LLP3.4} with $f(x)=\mathbf{1}_{A}(x)$.   
Take $g=1$, $\alpha_n=1$ and $\beta_n=\mathbb{E}_{T^n\xi}N^2$.  
\end{proof}

\subsection{Adaptation of the proof for backward  law of large numbers}
  
\begin{proof}[Proof of Corollary \ref{LLT3.2b} (\ref{LLNTBE1}) ]
Let $f_\xi(x)=\mathbf1_A(x)-\mu_\xi(A)$. Clearly, $f_\xi$ is bounded by $1$. Like the proof of Proposition \ref{LLP3.3}, we still have (\ref{LLNEsp}) with $f_{\xi}$ in place of $f_{\xi,n}$, but here and throughout this proof,  $\T_k(u)$ ($|u|+k\leq n$) denotes the set of individuals in $k$th generation of a tree rooted at $u$ in the environment $\xi^{(n)}$. Firstly, we have a.s.,
$$\frac{1}{P_n^2}\mathbb{E}_{\xi}\left[\sum_{u\in\T_n}f_{\xi}^2(X(u))\right]\leq \frac{1}{P_n}\rightarrow0\;\;\text{as $n\rightarrow\infty$.}$$ 
By Lemma \ref{LLL3.2}, for $|u|<n$ and $k\leq n-|u|$,
\begin{equation}
\frac{\mathbb{E}_{\xi,x}\sum_{uv\in\T_k(u)}f_\xi(X(uv))}{\mathbb{E}_{\xi^{(n)}}N_k(u)}=Q_{n-|u|-1}\cdots Q_{n-|u|-k}f_\xi(x).
\end{equation}
Therefore,
\begin{eqnarray*}
\mathbb{E}_{\xi}  A_{n,r}=(m_{n-1}\cdots m_{n-r})^{-2}   \mathbb{E}_{\xi}\left[\sum_{w\in\T_r}\sum_{\substack{wi,wj\in\T_1(w)\\i\neq j}}g_{\xi,n_r}(X(wi)) g_{\xi,n_r}(X(wj))\right],
\end{eqnarray*}
where $g_{\xi,n}(x)=Q_{n-1}\cdots Q_0f_\xi(x)$ and $n_r:=n-r-1$. 
For $r\geq K+1$, since $m_0>a$ and $\mathbb{E}_\xi N^2\leq(\mathbb{E}_\xi N^p)^{2/p}\leq b^{2/p}$, we get
\begin{eqnarray*}
\sum_{r=K+1}^{n-1}\mathbb{E}_{\xi}  A_{n,r}&\leq&\sum_{r=K+1}^{n-1}(m_{n-1}\cdots m_{n-r}m_{n_r}^2)^{-1}\mathbb{E}_{T^{n_r}\xi}N^2\\
&\leq&\sum_{r=K+1}^{\infty}a^{-(r+2)}b^{2/p}\rightarrow 0\;\;\text{as $K\rightarrow\infty$.}
\end{eqnarray*}
Now consider $0\leq r\leq K$. The fact that $m_0>a$ and $g_{\xi,n}$ is bounded by $1$ yields 
\begin{eqnarray*}
\mathbb{E}_{\xi}  A_{n,r}&\leq&a^{-2r}\mathbb{E}_{\xi}  \left[\sum_{w\in\T_r}N(w)\sum_{wi\in\T_1(w)}|g_{\xi,n_r}(X(wi))|\right]\\
&\leq&a^{-2r}\mathbb{E}_{\xi}  \left[N_{r+1}^{(n)}\sum_{w\in\T_{r+1}} |g_{\xi,n_r}(X(w))|\right]\\
&=&S_{n,r}+T_{n,r},
\end{eqnarray*}
where
$$S_{n,r}=a^{-2r}\mathbb{E}_{\xi}  \left[N_{r+1}^{(n)}\mathbf1_{\{N_{r+1}^{(n)}\leq K\}}\sum_{w\in\T_{r+1}} |g_{\xi,n_r}(X(w))|\right]$$
and
$$T_{n,r}=a^{-2r}\mathbb{E}_{\xi}  \left[N_{r+1}^{(n)}\mathbf1_{\{N_{r+1}^{(n)}>K\}}\sum_{w\in\T_{r+1}} |g_{\xi,n_r}(X(w))|\right].$$
We first deal with $S_{n,r}$ and fix $r \geq 0$. Let us label  the individuals in generation  $r+1$ as $1,2, \cdots$. For $j>N_{r+1}^{(n)}$,  the value of $g_{\xi,n_r}(X(j))$ will be $0$ by convention. So 
$$ S_{n,r}\leq Ka^{-2r}\sum_{j=1}^K\mathbb{E}_{\xi}   |g_{\xi,n_r}(X(j))|.$$
By (\ref{LLET3.2}), for almost all $\xi$,
$$g_{ \xi,n}(x)=\mathbb{P}_{\xi^{(n)},x}(Y_n\in A)-\mu_\xi(A)\stackrel{n\rightarrow\infty}{\longrightarrow}0\;\;\text{for every $x\in\mathcal{X}$.}$$
By the bounded convergence theorem, we have for each $r$ fixed, $\lim_n S_{n,r}=0$ a.s.
On the other hand, 
following arguments in Huang \& Liu \cite{huang1},  the fact that 
$$\mathbb{E}_\xi\left(\frac{N}{m_0}\right)^p<b/a^p$$ for some $p>2$ implies that for all $n$, $$\sup_{0\leq r\leq n}\mathbb{E}_{\xi^{(n)}}\left[W_{r+1}^{(n)}\right]^p\leq C_p$$ for some constant $C_p$ depending on $p$. Thus for each $r$ fixed and $\delta\in(0, p-2)$,
\begin{eqnarray*}
T_{n,r}&\leq&a^{-2r}\mathbb{E}_{\xi^{(n)}}  \left[\left(N_{r+1}^{(n)}\right)^2\mathbf1_{\{N_{r+1}^{(n)}>K\}}\right]\\
&=&K^{-\delta}a^{-2r}\mathbb{E}_{\xi^{(n)}}  \left[N_{r+1}^{(n)}\right]^{2+\delta}\\
&\leq&C_{r,\delta}K^{-\delta} \longrightarrow 0
\end{eqnarray*}
as $K\rightarrow\infty$, where $C_{r,\delta}$ is a constant depending on $r$ and $\delta$. Therefore, a.s., $\lim_{n}\mathbb{E}_{\xi}  A_{n,r}=0$ for each $r$ fixed, so that $ \sum_{0\leq r\leq K}\mathbb{E}_{\xi}  A_{n,r}\rightarrow 0$ as $n\rightarrow\infty$. So (\ref{LLNTBE1}) is proved.
\end{proof}

\begin{proof}[Proof of Corollary \ref{LLT3.2b}  (\ref{LLNeT3.2})]
 Since  $\sup_n\mathbb{E}_{\xi^{(n)}}\left[W_{n}^{(n)}\right]^2\leq C$ for some constant $C$, by the bounded convergence theorem, (\ref{LLNTBE1}) implies that 
\begin{equation}\label{LLNPBE2}
\frac{Z_n^{(n)}(A)-\mu_\xi(A)N_n^{(n)}}{P_n}\rightarrow 0  \;\; \text{in    $L^2$ and in $\mathbb{P}$-probability. }
\end{equation}
For any $\varepsilon>0$ and $\alpha>0$,
\begin{eqnarray}\label{LLNPBE3} &&\mathbb{P}\left(\bigg|\frac{Z_n^{(n)}(A)}{N_n^{(n)}}-\mu_\xi(A)\bigg|>\varepsilon, N_n^{(n)}>0\right)\nonumber\\
&\leq&\mathbb{P}\left(\bigg|\frac{Z_n^{(n)}(A)-\mu_\xi(A)N_n^{(n)}}{P_n}\bigg|>\alpha\varepsilon\right)+\mathbb{P}(0<W_n^{(n)}<\alpha).
\end{eqnarray}
By (\ref{LLNPBE2}), it is clear that the first term in the right side of the inequality above tends to 0  as $n\rightarrow\infty$.
When the environment is reversible in law, under $\mathbb{P}$, $W_n^{(n)}$ has the same distribution as $W_n$ of the forward case. Thus we have 
$$\mathbb{P}(0<W_n^{(n)}<\alpha)=\mathbb{P}(0<W_n<\alpha).$$
In the forward case, it is known that $W_n$ tends to a limit $W$ a.s., and
$$\lim_{n\rightarrow\infty}\mathbb{P}(N_n=0)=\mathbb{P}(N_n=0\; \text{for some $n$})=\mathbb{P}(W=0)\;\;a.s.$$
when $\mathbb{E}\log m_0>0$ and $\mathbb{E}\frac{N}{m_0}\log^+N<\infty$. Therefore we have 
$$\limsup_{n\rightarrow\infty}\mathbb{P}(0<W_n<\alpha)=\mathbb{P}(0<W\leq\alpha).$$
Since $\lim_{\alpha\downarrow0}\mathbb{P}(0<W\leq\alpha)=0$, for any $\eta>0$, there exists  $\alpha>0$ small enough such that $\mathbb{P}(0<W\leq\alpha)\leq \eta$. Taking the superior limit in (\ref{LLNPBE3}), we obtain
$$\limsup_{n\rightarrow\infty}\mathbb{P}\left(\bigg|\frac{Z_n^{(n)}(A)}{N_n^{(n)}}-\mu_\xi(A)\bigg|>\varepsilon, N_n^{(n)}>0\right)\leq \mathbb{P}(0<W\leq\alpha)\leq \eta.$$
Letting $\eta\rightarrow0$ completes the proof. 
\end{proof}

\subsection{Proof of the law of large numbers on the whole tree}
Proposition \ref{LLNPW} below is in the same vein as Proposition \ref{LLP3.4}. We recall the notation $F_{\xi,n}=\nu_\xi Q_0\cdots Q_{n-1}$.

\begin{pr}\label{LLNPW}
Let $\nu_\xi$ be the distribution of $X(\emptyset)$. We assume that  for almost all $\xi$, there exist non negative numbers $(\alpha_n,\beta_n)$$=(\alpha_n(\xi),\beta_n(\xi))$ such that
\begin{itemize}
\item[(H1)] for all   $n\in\mathbb{N}$ and $x\in\mathcal{X}$,  
$$\sup_{0\leq r<n}Q_r\cdots Q_{n-1}|f_\xi|(x)\leq g(x);$$ 
\item[(H2)] for every $n\in\mathbb N$,
$$ \max\{F_{\xi,n}(f_\xi^2),\;F_{\xi,n}(|f_\xi| g),F_{\xi,n}(g)\}\leq \alpha_n, \quad \sum_n\frac{\alpha_n}{P_n}<\infty ;$$
\item[(H3)] for every $n\in\mathbb N$, 
$$ \max\{F_{\xi,n}(J_{T^{n}\xi}(g\otimes g)),F_{\xi,n}(J_{T^{n}\xi}(g\otimes
\mathbf{1})),F_{\xi,n}(J_{T^{n}\xi}(\mathbf{1}\otimes \mathbf{1}))\}\leq
\beta_n,\quad\sum_n\frac{\beta_n}{P_nm_n^2}<\infty;$$
\item[(H4)]there exists $\pi (f_\xi)\in\R$ bounded by some constant $M$ such that for each $r$ fixed, 
$$\lim_{n\rightarrow\infty}\frac{1}{n-r}\sum_{k=r+1}^nQ_r\cdots
Q_{k-1}f_\xi(x)=\pi (f_\xi)\quad \text{for every $x\in\mathcal{X}$.}$$
\end{itemize}
\end{pr}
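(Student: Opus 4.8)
The conclusion to establish is that, under (H1)--(H4) together with the standing requirement $W_n\to W$ in $\mathbb{P}_\xi$-$L^2$, for almost every $\xi$
\[
\frac{1}{n}\sum_{k=1}^{n}\frac{\sum_{u\in\T_k}f_\xi(X(u))}{P_k}\longrightarrow \pi(f_\xi)\,W\qquad\text{in }\mathbb{P}_\xi\text{-}L^2 .
\]
The plan is to reproduce the variance computation of Proposition \ref{LLP3.3}, but carried out for the Cesàro average over generations instead of a single generation, the genuinely new input being the ergodic average hypothesis (H4). First I would center and reduce. Writing $S_k:=P_k^{-1}\sum_{u\in\T_k}f_\xi(X(u))$ and $\widetilde f:=f_\xi-\pi(f_\xi)$, I split
\[
\frac{1}{n}\sum_{k=1}^{n}S_k-\pi(f_\xi)W=\frac1n\sum_{k=1}^n D_k+\pi(f_\xi)\Big(\frac1n\sum_{k=1}^n W_k-W\Big),\qquad D_k:=\frac{1}{P_k}\sum_{u\in\T_k}\widetilde f(X(u)).
\]
Since $W_k\to W$ in $L^2$, its Cesàro average also converges to $W$ in $L^2$, so the second term vanishes and it suffices to show $\frac1n\sum_{k\le n}D_k\to0$ in $\mathbb{P}_\xi$-$L^2$.

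Next I would expand $\mathbb{E}_\xi\big(\frac1n\sum_k D_k\big)^2=n^{-2}\sum_{k,l}\mathbb{E}_\xi[D_kD_l]$ and, for each ordered pair $(k,l)$, classify the pairs $(u,v)\in\T_k\times\T_l$ by the generation $r$ of their most recent common ancestor $u\wedge v$, exactly as in the proof of Lemma \ref{LLNP1}. Two regimes arise: either $u$ is an ancestor of $v$ (possible only when $k\neq l$), or $u$ and $v$ split at some generation $r<\min(k,l)$. Applying the many-to-one formula (Lemma \ref{LLL3.2}) inside each subtree turns both regimes into quantities in the auxiliary kernels: the ancestor regime yields $P_k^{-1}F_{\xi,k}(\widetilde f\cdot Q_{k}\cdots Q_{l-1}\widetilde f)$, while the split regime yields
\[
\frac{1}{P_r m_r^2}\,\nu_\xi Q_0\cdots Q_{r-1}\,J_{T^r\xi}\big(Q_{r+1}\cdots Q_{k-1}\widetilde f\otimes Q_{r+1}\cdots Q_{l-1}\widetilde f\big),
\]
which is exactly the object $\mathbb{E}_\xi A_{k,r}$ of Proposition \ref{LLP3.3}, now carrying two distinct terminal generations.

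The heart of the argument is to sum over $k,l$ and exploit the bilinearity of $J$. Summing the split contribution over all $k,l>r$ factorises it as $\frac{1}{P_r m_r^2}\nu_\xi Q_0\cdots Q_{r-1}J_{T^r\xi}(G_{r,n}\otimes G_{r,n})$, where $G_{r,n}:=\frac1n\sum_{k=r+1}^n Q_{r+1}\cdots Q_{k-1}\widetilde f$ is precisely the ergodic average of (H4); since $\pi(f_\xi)$ is its limiting constant, (H4) forces $G_{r,n}(x)\to0$ pointwise for each fixed $r$. I would then split the sum over $r$ at a large threshold $K$: for $r\le K$ the bound (H1) dominates $|G_{r,n}|$ by $g+M$, and dominated convergence through $J$ and through $\nu_\xi Q_0\cdots Q_{r-1}$ (legitimate because (H3) makes $\nu_\xi Q_0\cdots Q_{r-1}J_{T^r\xi}((g+M)\otimes(g+M))$ finite) sends each term to $0$ as $n\to\infty$; for $r>K$ the uniform bound $J_{T^r\xi}(G_{r,n}\otimes G_{r,n})\le c\,J_{T^r\xi}((g+M)\otimes(g+M))$ with the summability $\sum_r \beta_r/(P_r m_r^2)<\infty$ in (H3) makes the tail small uniformly in $n$. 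Finally the diagonal term $n^{-2}\sum_k P_k^{-1}F_{\xi,k}(\widetilde f^2)$ and the ancestor cross terms are both $O(1/n)$, thanks to $\sum_k \alpha_k/P_k<\infty$ in (H2) and $\sum_k 1/P_k<\infty$ in the supercritical regime; notably these two families do not require (H4).

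The main obstacle I anticipate is the correct handling of the double generation sum. Unlike in Proposition \ref{LLP3.3}, the inner iterates $Q_{r+1}\cdots Q_{k-1}\widetilde f$ do \emph{not} converge as $k\to\infty$ (this is exactly the non-homogeneous obstruction flagged after Example 3.2), so no termwise limit is available; convergence is recovered only after the Cesàro averaging, and it is the bilinearity of $J$ that lets one replace the two inner sums by the single averaged function $G_{r,n}$ before invoking (H4). Keeping the domination uniform in $n$ while performing this interchange---so that dominated convergence applies for each fixed $r$ and the tail in $r$ is bounded independently of $n$---is the delicate bookkeeping the proof must get right.
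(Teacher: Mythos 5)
Your proposal is correct and follows essentially the same route as the paper's proof: the same split of pairs $(u,v)\in\mathbf\Gamma_n\times\mathbf\Gamma_n$ into same-lineage and split pairs, the same use of the many-to-one formula, and the same key step of pushing the Ces\`aro average through the bilinear form $J$ to produce the averaged function (your $G_{r,n}$, the paper's $R_{n,r}$) so that (H4) applies pointwise for fixed $r$, with (H1) giving the domination and (H3) controlling the tail in $r$ beyond the threshold $K$. The handling of the diagonal and ancestor terms via (H2) also matches the paper.
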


In particular, if $f_\xi=f$, then conditions (H1) and (H4) can be simplified to
\begin{itemize}
\item[(H1')] $ Q_0\cdots Q_{n-1}|f|(x)\leq g(x)$ for  all $n\in\mathbb{N}$ and $x\in\mathcal{X}$;
\item[(H4')]there exists $\pi (f)\in\R$ such that
$\frac{1}{n}\sum_{k=1}^nQ_0\cdots Q_{k-1}f(x)\rightarrow \pi (f)$
 as $n\rightarrow\infty$ for every $x\in\mathcal{X}$.
\end{itemize}

\medskip
 \begin{proof}[Proof of Proposition \ref{LLNPW}]We only prove the case where $\pi(f_\xi)=0$. For general case, it suffices to consider $f_\xi-\pi(f_\xi)$ in place of $f_\xi$. We shall prove that under the hypothesis (H1)-(H4), a.s.,
 \begin{equation} \mathbb{E}_\xi\left(\frac{1}{n}\sum_{u\in\mathbf\Gamma_n}\frac{f_\xi(X(u))}{P_{|u|}}\right)^2\rightarrow0\;\;\text{as $n\rightarrow\infty$.}
 \end{equation}
 Notice that $\mathbf\Gamma_n=\bigcup_{k=1}^n\T_k$ is the set of all individuals in the first $n$ generation. For $u,v\in\mathbf\Gamma_n$, we discuss for two cases: (i) $u$ and $v$ in the same life line, which means that one is an ancestor of the other, i.e. $u\wedge v=u\;\text{or}\; v$; (ii) the contrary case, i.e. $u\wedge v\neq u,v$. So we can write 
 \begin{eqnarray*} \mathbb{E}_\xi\left(\frac{1}{n}\sum_{u\in\mathbf\Gamma_n}\frac{f_\xi(X(u))}{P_{|u|}}\right)^2&=&\frac{1}{n^2}\mathbb{E}_\xi\left[\sum_{u,v\in\mathbf\Gamma_n}\frac{f_\xi(X(u))f_\xi(X(v))}{P_{|u|}P_{|v|}}\right]\\
 &=& S_{n,\xi}+T_{n,\xi},
 \end{eqnarray*}
 where $$S_{n,\xi}=\frac{1}{n^2}\mathbb{E}_\xi\left[\sum_{\substack{u,v\in\mathbf\Gamma_n\\u\wedge v=u \;or\; v}}\frac{f_\xi(X(u))f_\xi(X(v))}{P_{|u|}P_{|v|}}\right],$$
 and
 $$ T_{n,\xi}=\frac{1}{n^2}\mathbb{E}_\xi\left[\sum_{\substack{u,v\in\mathbf\Gamma_n\\u\wedge v\neq u, v}}\frac{f_\xi(X(u))f_\xi(X(v))}{P_{|u|}P_{|v|}}\right].$$
 We need to prove that $S_{n,\xi}$, $T_{n,\xi}$ tend to 0 a.s. as $n\rightarrow\infty$.
 
At first,   $S_{n,\xi}$ can be decomposed as 
\begin{equation}\label{LLNTWP1} S_{n,\xi}=\frac{2}{n^2}\mathbb{E}_\xi\left[\sum_{\substack{u,v\in\mathbf\Gamma_n\\u<v}}\frac{f_\xi(X(u))f_\xi(X(v))}{P_{|u|}P_{|v|}}\right]+\frac{1}{n^2}\mathbb{E}_\xi\left[\sum_{u\in\mathbf\Gamma_n}\frac{f^2_\xi(X(u))}{P_{|u|}^2}\right].
\end{equation}
For the second term in the right side of the equality above, we use  (H2) and get 
\begin{eqnarray*}
\frac{1}{n^2}\mathbb{E}_\xi\left[\sum_{u\in\mathbf\Gamma_n}\frac{f^2_\xi(X(u))}{P_{|u|}^2}\right]&=&
\frac{1}{n^2}\sum_{r=1}^n\mathbb{E}_\xi\left[\sum_{u\in\mathbb{T}_r}\frac{f^2_\xi(X(u))}{P_{|r|}^2}\right]\\
&=&\frac{1}{n^2}\sum_{r=1}^n P_r^{-1}\nu_\xi Q_0\cdots Q_{r-1} f_\xi^2\\
&\leq&\frac{1}{n^2}\sum_{r=1}^\infty P_r^{-1}\alpha_r\rightarrow0 \;\;\text{as $n\rightarrow\infty$,}
\end{eqnarray*}
since $\sum_n P_n^{-1}\alpha_n<\infty$ a.s.. For $u,v\in\mathbf \Gamma_n$ with $u<v$, we write $v=u\tilde v$ with $|\tilde v|=k-r$ if $u\in \T_r$ and $v\in \T_k$ ($k>r$). Then, using  
(H2) and the a.s. convergence of $\sum_n P_n^{-1}\alpha_n$ again, we have a.s.,
\begin{eqnarray*}
&&\frac{2}{n^2}\mathbb{E}_\xi\left[\sum_{\substack {u,v\in\mathbf\Gamma_n\\u<v}}\frac{f_\xi(X(u))f_\xi(X(v))}{P_{|u|}P_{|v|}}\right]\\
&& \quad = \frac{2}{n^2}\sum_{r=1}^n\sum_{k=r+1}^n\mathbb{E}_\xi\left[\sum_{u\in\mathbb{T}_r}\sum_{u\tilde v\in\mathbb{T}_{k-r}(u)}\frac{f_\xi(X(u))f_\xi(X(u\tilde v))}{P_{|r|} P_{|k|}}\right]\\
&& \quad = \frac{2}{n^2} \sum_{r=1}^n\sum_{k=r+1}^n P_r^{-1}P_k^{-1}\mathbb{E}_\xi\left[\sum_{u\in\mathbb{T}_r}f_\xi(X(u))
\mathbb{E}_{\xi,X(u)}\left[\sum_{u\tilde v\in\mathbb{T}_{k-r}(u)}f_\xi(X(u\tilde v))\right]\right]\\
&& \quad = \frac{2}{n^2} \sum_{r=1}^n\sum_{k=r+1}^n P_r^{-2}\mathbb{E}_\xi\left[\sum_{u\in\mathbb{T}_r}f_\xi(X(u) )Q_r\cdots Q_{k-1} f_\xi(X(u))\right]\\
&& \quad \leq \frac{2(n-r)}{n^2} \sum_{r=1}^nP_r^{-1}\nu_\xi Q_0\cdots Q_{r-1}|f_\xi|g\\
&& \quad \leq \frac{2}{n}\sum_{r=1}^\infty P_r^{-1}\alpha_r\rightarrow0 \;\;\text{as $n\rightarrow\infty$.}
\end{eqnarray*}
Hence we have $S_{n,\xi} \rightarrow 0$ a.s. as $n\rightarrow\infty$.

Now we consider $T_{n,\xi}$. For $K=K(\xi)$ fixed suitable large, 
$$T_{n,\xi}=\sum_{r=1}^K\mathbb{E}_\xi A_{n,r}+\sum_{r=K+1}^{n-1}\mathbb{E}_\xi A_{n,r},$$
where 
$$A_{n,r}=\frac{1}{n^2}\sum_{k,l=r+1}^n\sum_{w\in\T_r}\sum_{\substack{wi,wj\in\T_1(w)\\i\neq j}}\sum_{\substack{wi\tilde u\in\T_{k-r-1}(wi)\\wj\tilde v\in\T_{k-r-1}(wj)}}\frac{f_\xi(X(wi\tilde u))f_\xi(X(wi\tilde v))}{P_k P_l}.$$
With the notation $Q_{r,n}=Q_{r}\cdots Q_{n-1}$, we compute
\begin{eqnarray*}
&&\mathbb{E}_\xi A_{n,r}\\&& \quad =\frac{1}{n^2}\sum_{k,l=r+1}^n(P_k P_l)^{-1}\mathbb{E}_\xi\left[\sum_{w\in\T_r}\sum_{\substack{wi,wj\in\T_1(w)\\i\neq j}}\mathbb{E}_{\xi,X(wi)}\left[\sum_{wi\tilde u\in\T_{k-r-1}(wi)}f_\xi(X(wi\tilde u))\right]\right. \\
&&\qquad \qquad\qquad\qquad\qquad\qquad\qquad\times\quad\mathbb{E}_{\xi,X(wi)}\left.\left[\sum_{wj\tilde v\in\T_{l-r-1}(wj)}f_\xi(X(wj\tilde v))\right]\right]\\
&& \quad = \frac{1}{n^2}\sum_{k,l=r+1}^n P_{r+1}^{-2}\mathbb{E}_\xi\left[\sum_{w\in\T_r}\sum_{\substack{wi,wj\in\T_1(w)\\i\neq j}} Q_{r+1,k}f_\xi(X(wi)) Q_{r+1,l}f_\xi(X(wj))\right]\\
&& \quad = \frac{(n-r)^2}{n^2}P_{r+1}^{-2}\mathbb{E}_\xi\left[\sum_{w\in\T_r}\sum_{\substack{wi,wj\in\T_1(w)\\i\neq j}}R_{n,r}(X(wi))R_{n,r}(X(wj)) \right]\\
&&\quad = \frac{(n-r)^2}{n^2}\frac{1}{P_r m_r^2}\nu_\xi Q_0\cdots Q_{r-1}J_{T^{r+1}\xi}(R_{r,n}\otimes R_{r,n}),
\end{eqnarray*}
where 
$$R_{n,r}(x)=\frac{1}{n-r}\sum_{k=r+1}^n Q_{r+1,k}f_\xi(x).$$
we oberve that thanks to  (H1), $\sup_{0\leq r<n}|R_{r,n}|\leq g$ for every $n$.
And by (H4), for almost all $\xi$, for each $r$ fixed,  
$$R_{n,r}\otimes R_{n,r}(y,z)\stackrel{n\rightarrow\infty}{\longrightarrow}0 \;\;\text{for each $(y,z)\in\mathcal X^2$.}$$
Following similar arguments in the proof of Proposition \ref{LLP3.3}, we see $T_{n,\xi}\rightarrow0$ a.s. as $n\rightarrow\infty$. The proof is completed.
 \end{proof}
 
 \begin{lem}\label{LLNLL}
 Let $p>1$. If $\mathbb{E}(\log m_0)>0$ and $\mathbb{E}\left(\log\mathbb{E}_\xi\left(\frac{N}{m_0}\right)^p\right)<\infty$, then 
 $$\lim_{n\rightarrow\infty}\frac{1}{n}\sum_{k=1}^{n}W_k=W\qquad\text{in $\mathbb{P}_\xi$-$L^p$.}$$
 \end{lem}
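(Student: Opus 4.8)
The plan is to deduce the Cesàro statement from the stronger fact that $W_n\to W$ in $\mathbb P_\xi$-$L^p$, which is the real content. Indeed, once $a_k:=\|W_k-W\|_{L^p(\mathbb P_\xi)}\to0$, the elementary Cesàro lemma gives $\|\tfrac1n\sum_{k=1}^n W_k-W\|_{L^p(\mathbb P_\xi)}\le\tfrac1n\sum_{k=1}^n a_k\to0$, which is the assertion. Recall that $W_n$ is a nonnegative $\mathcal F_n$-martingale with $W_0=1$ and $W_n\to W$ a.s. For a martingale, $L^p$-boundedness with $p>1$ suffices: by Doob's $L^p$ maximal inequality applied under $\mathbb P_\xi$, $\sup_n W_n\in L^p(\mathbb P_\xi)$, so $\{W_n^p\}$ is uniformly integrable and the a.s. convergence upgrades to $L^p$ convergence. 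Thus everything reduces to proving $\sup_n\mathbb E_\xi W_n^p<\infty$ for almost every $\xi$.

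First I would estimate the martingale increments $\Delta_k:=W_{k+1}-W_k=P_{k+1}^{-1}\sum_{u\in\T_k}(N(u)-m_k)$. Conditionally on $\mathcal F_k$ the summands are i.i.d., centered, and there are $N_k$ of them. For $1<p\le2$, the von Bahr--Esseen inequality together with $\mathbb E_\xi N_k=P_k$, $P_{k+1}=P_k m_k$, and $\mathbb E_\xi|N-m_k|^p/m_k^p\le2^p\,\mathbb E_{T^k\xi}(N/m_0)^p$ yields
\[
\mathbb E_\xi|\Delta_k|^p\le \frac{C_p}{P_k^{\,p-1}}\,\mathbb E_{T^k\xi}\Big(\frac{N}{m_0}\Big)^p .
\]
Combining this with the Burkholder--Davis--Gundy inequality and the subadditivity $(\sum\Delta_k^2)^{p/2}\le\sum|\Delta_k|^p$ (valid for $p\le2$) bounds $\mathbb E_\xi|W_n-1|^p$ by $C_p\sum_{k\ge0}\mathbb E_\xi|\Delta_k|^p$, uniformly in $n$. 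For $p>2$ the same scheme works by a bootstrap: assuming the $L^{p/2}$-bound already holds, Rosenthal's inequality produces, besides the term above, a contribution $\le C\,P_k^{-p/2}(\sigma_k^2/m_k^2)^{p/2}\,\mathbb E_\xi W_k^{p/2}$, and a Minkowski argument in $L^{p/2}$ turns the summability of $\mathbb E_\xi|\Delta_k|^p$ into the desired uniform $L^p$-bound; finitely many doublings from a base exponent in $(1,2]$ reach any $p$.

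It then remains to check that, for almost every $\xi$, $\sum_k P_k^{-(p-1)}\mathbb E_{T^k\xi}(N/m_0)^p<\infty$ (and the analogous series from the $p>2$ step). Here the hypotheses enter: by Jensen $\mathbb E_\xi(N/m_0)^p\ge1$, so $\log\mathbb E_\xi(N/m_0)^p\ge0$ and the assumption $\mathbb E\log\mathbb E_\xi(N/m_0)^p<\infty$ says this stationary sequence is integrable; hence $\tfrac1k\log\mathbb E_{T^k\xi}(N/m_0)^p\to0$ a.s. Meanwhile Birkhoff's theorem and the stationarity/ergodicity of $\xi$ give $\tfrac1k\log P_k=\tfrac1k\sum_{j<k}\log m_j\to\mathbb E\log m_0>0$ a.s. Consequently $\tfrac1k\log\big(P_k^{-(p-1)}\mathbb E_{T^k\xi}(N/m_0)^p\big)\to-(p-1)\mathbb E\log m_0<0$, so the general term decays exponentially and the series converges a.s.; the lower moments needed in the $p>2$ step are controlled by Lyapunov's inequality from the same assumption. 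The main obstacle is the $p>2$ case---organizing the Rosenthal bootstrap and the accompanying a.s. summability---whereas the supercriticality $\mathbb E\log m_0>0$ is exactly what forces the exponential decay that renders all these series summable.
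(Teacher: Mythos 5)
Your proof is correct, but it takes a genuinely different (and much longer) route than the paper. The paper's own argument is essentially two lines: it invokes Theorems 2.1 and 2.2 of Huang \& Liu \cite{huang1}, which under exactly these hypotheses give $\mathbb{E}_\xi|W_k-W|^p\le C_\xi\rho^{-k}$ for some $\rho>1$, and then bounds $\mathbb{E}_\xi\bigl|\frac1n\sum_{k=1}^n W_k-W\bigr|^p\le\frac1n\sum_{k=1}^n\mathbb{E}_\xi|W_k-W|^p=O(1/n)$ by convexity of $|x|^p$ and the geometric series. You instead reprove the cited ingredient from scratch: the reduction via Doob's maximal inequality to $\sup_n\mathbb{E}_\xi W_n^p<\infty$, the von Bahr--Esseen estimate $\mathbb{E}_\xi|\Delta_k|^p\le C_pP_k^{1-p}\mathbb{E}_{T^k\xi}(N/m_0)^p$ for $1<p\le2$, the Rosenthal bootstrap in doublings of the exponent for $p>2$, and the a.s.\ summability of the resulting series via Birkhoff's theorem ($\frac1k\log P_k\to\mathbb{E}\log m_0>0$) together with $\frac1k\log\mathbb{E}_{T^k\xi}(N/m_0)^p\to0$ for the stationary integrable sequence --- all of these steps are sound, and this is essentially the content of the Huang--Liu proof being cited. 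Your Ces\`aro step also differs slightly: you use Minkowski plus the elementary Ces\`aro lemma, which only needs $\|W_k-W\|_{L^p(\mathbb{P}_\xi)}\to0$, whereas the paper's Jensen-plus-geometric-series argument exploits the exponential rate and yields the quantitative bound $O(n^{-1})$ for the Ces\`aro average. In short: your version is self-contained where the paper delegates to a reference, at the cost of length and of losing the explicit rate; both are valid proofs of the stated lemma.
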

 
 \begin{proof}
 By Theorems 2.1 and 2.2 in Huang \& Liu \cite{huang1}, the integrability assumptions 
ensure that a.s., 
 $$0<\mathbb{E}_\xi W^p<\infty \qquad\text{ and}\qquad \lim_{k\rightarrow\infty}\rho^k\mathbb{E}_\xi|W_k-W|^p=0,$$
for some $\rho>1$. It  implies that 
\begin{equation}\label{LLNEL4.2}
\mathbb{E}_\xi|W_k-W|^p\leq C_\xi\rho^k\;\;a.s.
\end{equation}
for some $C_\xi<\infty$. Therefore, a.s.,
\begin{eqnarray*}
\mathbb{E}_\xi\left|\frac{1}{n}\sum_{k=1}^nW_k-W\right|^p&=&\mathbb{E}_\xi\left|\frac{1}{n}\sum_{k=1}^n(W_k-W)\right|^p\\
&\leq&\frac{1}{n}\sum_{k=1}^n\mathbb{E}_\xi|W_k-W|^p\\
&\leq& C_\xi\frac{1}{n}\sum_{k=1}^\infty\rho^{-k}\rightarrow0 \;\;\text{as $n\rightarrow\infty$,}
\end{eqnarray*}
since $\sum_{k=1}^\infty\rho^{-k}<\infty$.
 \end{proof}
 
\begin{proof}[Proof of Theorem \ref{LLT3.3}]
By Lemma \ref{LLNLL}, $\frac{1}{n}\sum_{k=1}^{n}W_k\rightarrow W$ in $\mathbb{P}_\xi$-$L^2$. Applying Proposition \ref{LLNPW} with $f(x)=\mathbf1_A(x)$, $g=1$, $\alpha_n=1$ and $\beta_n=\mathbb{E}_{T^n\xi}N^2$, we obtain (\ref{LLNEW1}).

Now we prove (\ref{LLNEW2}). Since $W_n\rightarrow W$ 	a.s.,   for any $\delta>0$, there exists $n_0$ such that $\forall n\geq n_0$,
\begin{equation}\label{LLNPW1}
|W_n-W|<\delta\qquad\text{and}\qquad \left|\frac{1}{W_n}-\frac{1}{W}\right|<\delta
\end{equation}
a.s. on the non-extinction event. We write 
\begin{eqnarray}\label{LLNPW2}
&&\frac{1}{n}\sum_{k=1}^{n}\frac{Z_k(A)}{N_k}-\mu(A)\nonumber\\
&& \quad =\frac{1}{n}\sum_{k=1}^{n_0}\frac{Z_k(A)}{P_k}\left(\frac{1}{W_k}-\frac{1}{W}\right)+\frac{1}{n}\sum_{k=n_0+1}^{n}\frac{Z_k(A)}{P_k}\left(\frac{1}{W_k}-\frac{1}{W}\right)\nonumber\\&& 
\qquad +\frac{1}{W}\left(\frac{1}{n}\sum_{k=1}^{n}\frac{Z_k(A)}{P_k}-\mu(A)W\right).
\end{eqnarray}
Obviously,  the first term in the right hand side of (\ref{LLNPW2}) tends to $0$ a.s. on the non-extinction event as $n$ goes to infinity. And the convergence of the third term is from (\ref{LLNEW1}). Therefore, to prove (\ref{LLNEW2}), we only need to show that the second term 
\begin{equation}\label{LLNPW3}
\frac{1}{n}\sum_{k=n_0+1}^{n}\frac{Z_k(A)}{P_k}\left(\frac{1}{W_k}-\frac{1}{W}\right)\stackrel{n\rightarrow\infty}{\longrightarrow}0\qquad\text{in $\mathbb P_\xi$-probability}
\end{equation}
on the non-extinction event for almost all $\xi$. In fact, by (\ref{LLNPW1}), we have
$$\left|\frac{1}{n}\sum_{k=n_0+1}^{n}\frac{Z_k(A)}{P_k}\left(\frac{1}{W_k}-\frac{1}{W}\right)\right|<\delta(W+\delta)$$
a.s. on the non-extinction event. The arbitrariness of $\delta$ yields (\ref{LLNPW3}).
 \end{proof}

\subsection{Proof of the central limit theorem}
\begin{proof}[Proof of Theorem \ref{LLT3.5}]
We shall apply Proposition \ref{LLP3.3} with $f_{\xi,n}(x)=\mathbf{1}_{A_n}(x)-\Phi(y)$, where $A_n=(-\infty,b_n(\xi)y+a_n(\xi) ]$.
 By (\ref{LLET3.5}) and Dini's Theorem, we have a.s.,
\begin{equation}\label{LLE3.10}
\lim_{n\rightarrow\infty}\sup_{y\in\mathbb{R}}\left|\mathbb{P}_{\xi,x}\left(\frac{Y_n-a_n(\xi)}{b_n(\xi)}\leq y\right)-\Phi(y)\right|=0  \qquad\text{ for every $x\in\mathcal{X}$.}
\end{equation}
Notice that $|f_{\xi,n}|\leq1$. Take  $g=1$, $\alpha_n=1$ and $\beta_n=\mathbb{E}_{T^n\xi}N^2$. It is easy to verify that (H1)-(H3) are satisfied. For (H4), by (\ref{LLE3.10}) and the continuity of $\Phi$, for each $r$ fixed,
\begin{eqnarray*}
&&Q_r\cdots Q_{n-1}f_{\xi,n}(x)\\
& =&\mathbb{P}_{T^r\xi,x}(Y_{n-r}\leq b_n(\xi)y+a_n(\xi))-\Phi(y)\\
& \leq& \sup_{y\in\mathbb R}\left|\mathbb{P}_{T^r\xi,x}\left(\frac{Y_{n-r}-a_{n-r}(T^r\xi)}{b_{n-r}(T^r\xi)}\leq y\right)
-\Phi\left(y\right)\right|\\
&& +\left|\Phi\left( \frac{b_n(\xi) y+a_n(\xi)-a_{n-r}(T^r\xi)}{b_{n-r}(T^r\xi)}\right)-\Phi(y)\right|,
\end{eqnarray*}
which goes to $0$ as $n\rightarrow\infty$ for every $x\in\mathcal{X}$.
By Proposition \ref{LLP3.3}, a.s.,
$$\frac{Z_n( A_n)}{P_n}-\Phi(y)\frac{N_n}{P_n}\rightarrow 0\qquad \text{in $\mathbb{P}_\xi$-$L^2$.}$$
Since $W_n\rightarrow W$ a.s. and in $\mathbb{P}_\xi$-$L^2$, we get (\ref{LLE3.8}) and (\ref{LLE3.9}).
\end{proof}

\textbf{Acknowledgement.} 
This work  was partially  funded by  Chair Modelisation Mathematique et Biodiversite VEOLIA-Ecole Polytechnique-MNHN-F.X., the professorial chair Jean Marjoulet, the project MANEGE `Mod\`eles
Al\'eatoires en \'Ecologie, G\'en\'etique et \'Evolution'
09-BLAN-0215 of ANR (French national research agency) and the National Natural Science Foundation of China, Grant No. 11101039.


\end{document}